\newcommand{\Marginpar}[1]{\marginpar{\tiny{#1}}}
\newcommand{\Note}[1]{{\par\noindent\hrulefill\par\tiny{#1}\par\noindent\hrulefill\par}}
\newcommand{\Detail}[1]{{#1}}
\renewcommand{\Marginpar}[1]{}
\renewcommand{\Note}[1]{}
\renewcommand{\Detail}[1]{}
\renewcommand*{\backref}[1]{}
\renewcommand*{\backrefalt}[4]{%
    \ifcase #1 (Not cited.)%
    \or        (Cited on page~#2.)%
    \else      (Cited on pages~#2.)%
    \fi}
\newtheorem{thm}{Theorem}
\newtheorem*{thm*}{Theorem}
\newtheorem{prop}[thm]{Proposition}
\newtheorem*{lem*}{Lemma}
\newtheorem{cor}[thm]{Corollary}
\theoremstyle{definition}
\newtheorem{defn}[thm]{Definition}
\newtheorem*{defn*}{Definition}
\newtheorem{rem}[thm]{Remark}
\renewcommand{\[}{\begin{equation*}}
\renewcommand{\]}{\end{equation*}}
\begin{document}
\parskip1mm

\title[Canonical almost-K\"ahler metrics]{Canonical almost-K\"ahler metrics dual to general plane-fronted wave Lorentzian metrics}

\author{Mehdi Lejmi}
\address{Department of Mathematics, Bronx Community College of CUNY, Bronx, NY 10453, USA.}
\email{mehdi.lejmi@bcc.cuny.edu}

\author{Xi Sisi Shen}
\address{Department of Mathematics, Columbia University, New York, NY 10027, USA.}
\email{xss@math.columbia.edu}

\thanks{The authors are warmly grateful to Amir Babak Aazami for giving his insights on pp-wave spacetimes and his comments on the note. The authors are also thankful to Daniele Angella, Giuseppe Barbaro, Abdellah Lahdili and Ali Maalaoui for very useful discussions. The first author is supported by the Simons Foundation Grant \#636075. }

\keywords{Extremal almost-K\"ahler Metrics, Second-Chern--Einstein almost-K\"ahler metrics, Lorentzian metrics, pp-wave spacetimes.}

\subjclass[2010]{53C55 (primary); 53B35 (secondary)} 

\maketitle
\begin{abstract}
In the compact setting, Aazami and Ream~\cite{Aazami:2022th} proved that Riemannian metrics dual to a class of Lorentzian metrics, called (compact) general plane-fronted waves, are almost-K\"ahler. In this note, we explain how to construct extremal and second-Chern--Einstein non-K\"ahler almost-K\"ahler metrics dual to those general plane-fronted waves.  
  \end{abstract}
  
  \section{introduction}
  
Let $(M,g)$ be a Riemannian manifold of dimension $2n$. The Riemannian metric $g$ is called almost-K\"ahler if there is a $g$-orthogonal almost-complex structure $J$ on $M$ such that the induced $2$-form $\omega$ defined by $\omega(\cdot,\cdot):=g(J\cdot,\cdot)$ is a symplectic form. When $J$ is integrable, $g$ is a K\"ahler metric. On an almost-K\"ahler manifold $(M,g,\omega,J)$, the $1$-parameter family of canonical Hermitian connections introduced by Gauduchon in~\cite{MR1456265} all coincide and are equal
to the canonical Hermitian connection $\nabla$ which is the unique connection preserving the almost-K\"ahler structure $(g,\omega,J)$ and whose torsion is the Nijenhuis tensor of $J$~\cite{MR66733,MR1456265} (in particular $\nabla$ is the Chern connection on $(M,g,\omega,J)$). More explicitly, $$\nabla_XY=D^g_XY-\frac{1}{2}J\left(D^g_XJ\right)Y,$$
where $D^g$ is the Levi--Civita connection with respect to $g$ and $X,Y$ are vector fields on $M.$ The Hermitian scalar curvature
$s^H$ of the almost-K\"ahler metric $g$ is obtained by taking twice the trace of $R^\nabla$ the curvature of $\nabla$ with respect to $\omega$.

Donaldson~\cite{MR1622945} showed how one can use a formal framework of the geometric invariant theory~\cite{MR1304906} in order to define natural representatives of almost-K\"ahler metrics called extremal almost-K\"ahler metrics~\cite{MR1969266,MR2747965,MR2661166}. An almost-K\"ahler metric $g$ is extremal if the symplectic gradient of $s^H$ is an infinitesimal isometry of $J$ (or equivalently a Killing vector field with respect to $g$). Extremal almost-K\"ahler metrics include almost-K\"ahler metrics with constant Hermitian
scalar curvature and Calabi extremal K\"ahler metrics~\cite{MR645743}. For more results about extremal non-K\"ahler almost-K\"ahler metrics,
we refer the reader to~\cite{MR2747965,MR2661166,MR3331165,MR2795448,MR2988734,MR4039808,MR4140765,MR3967373,MR4118148,MR4437352,MR4140765,Cahen:2022tm}. In addition to extremal almost-K\"ahler metrics, other possible canonical almost-K\"ahler metrics are the so-called second-Chern--Einstein metrics. Namely, an almost-K\"ahler metric is second-Chern--Einstein if $R^\nabla(\omega)$ is proportional to $\omega$ (here $s^H$ is not necessarily constant). When $J$ is integrable, second-Chern--Einstein almost-K\"ahler metrics are K\"ahler--Einstein. On complex manifolds, Hermitian metrics satisfying the second-Chern--Einstein condition were studied for instance in~\cite{MR571563,MR1477631,MR2781927,MR2795448,MR2925476,MR3869430,MR4125707,Angella:2020vt,Barbaro:2022vn}

Recently, Aazami and Ream~\cite{Aazami:2022th} demonstrate a one-to-one correspondence between certain almost-K\"ahler metrics
and a class of Lorentzian metrics called pp-wave spacetimes (see, e.g.,~\cite[Chapter 13]{MR619853} and~\cite{Aazami:2022th} and the references therein). In the compact setting, a generalization of pp-wave spacetimes is given by general plane-fronted wave Lorentzian metrics~\cite{MR1971289}. It turns out that Riemannian metrics dual to general plane-fronted wave Lorentzian metrics are almost-K\"ahler~\cite[Theorem 2]{Aazami:2022th}.
\begin{thm}~\cite[Theorem 2]{Aazami:2022th}
Let $(M,g_M)$ be a closed almost-K\"ahler manifold. Let $\varphi,\theta$ denote the standard angular coordinates on $\mathbb{S}^1\times\mathbb{S}^1$ and $\textbf{H}$ a function on $\mathbb{S}^1\times M$ independent of the first angular coordinate $\varphi.$
Consider the (compact) general plane-fronted wave Lorentzian metric $h$ defined on $\mathbb{S}^1\times\mathbb{S}^1\times M:$
$$h:=2\,d\varphi\,d\theta+\textbf{H}\, d\theta^2+g_M.$$
With respect to the vector field
$$T:=\frac{1}{2}\left(\textbf{H}+1 \right)\partial_\varphi-\partial_\theta,$$
let $g$ denote the Riemannian metric dual to $h$:
\begin{equation}\label{form-dual}
g:=h+2\,T^\flat\otimes T^\flat,
\end{equation}
where $\flat$ the $h$-Lorentzian dual of $T$. Then $g$ is an almost-K\"ahler metric on $\mathbb{S}^1\times\mathbb{S}^1\times M$
which is not a warped product.
\end{thm}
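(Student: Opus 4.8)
The plan is to read $g$ as the Riemannian \emph{dualization} of $h$ along a unit timelike vector field, and then to simply \emph{guess} the compatible symplectic form. First I would record the one computation that makes the construction tick: using $h(\p_\varphi,\p_\varphi)=0$, $h(\p_\varphi,\p_\theta)=1$ and $h(\p_\theta,\p_\theta)=\mathbf H$, one finds $h(T,T)=-1$, so $T$ is $h$-unit timelike and $T^\flat=-d\varphi+\tfrac12(1-\mathbf H)\,d\theta$. Since $g=h+2\,T^\flat\otimes T^\flat$ coincides with $h$ on the $h$-orthogonal complement $T^{\perp_h}$ — where $h$ is positive definite — while turning $h(T,T)=-1$ into $g(T,T)=+1$, the tensor $g$ is positive definite. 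This is the standard mechanism converting a Lorentzian metric into a Riemannian one by flipping the sign of a single unit timelike direction.

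Next I would make $g$ explicit and exhibit its block structure. A direct check gives $g(\p_\varphi,X)=g(\p_\theta,X)=0$ and $g(X,Y)=g_M(X,Y)$ for $X,Y$ tangent to $M$, so the circle distribution spanned by $\p_\varphi,\p_\theta$ is $g$-orthogonal to $TM$ and $g$ splits orthogonally as a circle block plus $g_M$. In the frame $(\p_\varphi,\p_\theta)$ the circle block is $G=\begin{pmatrix}2 & \mathbf H\\ \mathbf H & \mathbf H+\tfrac12(1-\mathbf H)^2\end{pmatrix}$, and the decisive algebraic fact — forced by the precise coefficient $\tfrac12(\mathbf H+1)$ in the definition of $T$ — is that $\det G=1$.

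The key idea is then to take $\omega:=d\varphi\wedge d\theta+\omega_M$, where $(J_M,\omega_M)$ is the almost-Kähler data on $M$. This $2$-form is manifestly closed, since $d(d\varphi\wedge d\theta)=0$ and $d\omega_M=0$, and it is nondegenerate because both blocks are. It remains to verify that the endomorphism $J$ defined by $\omega(\cdot,\cdot)=g(J\cdot,\cdot)$ is a $g$-compatible almost-complex structure; as everything is block-diagonal this reduces to the circle block, where $J=-G^{-1}\Omega$ with $\Omega=\begin{pmatrix}0 & 1\\ -1 & 0\end{pmatrix}$ and $J=J_M$ on $M$. Since $G^{-1}$ is symmetric and $\Omega$ antisymmetric, $\tr(G^{-1}\Omega)=0$, so Cayley--Hamilton gives $(G^{-1}\Omega)^2=-(\det G)^{-1}\id$; thus $J^2=-\id$ holds \emph{precisely because} $\det G=1$, and $g$-orthogonality of $J$ follows from $J^\top G J=G$ by the same identity. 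Hence $(g,J,\omega)$ is almost-Kähler. I find it striking that this part is essentially elementary once one spots the closed form $\omega$ and exploits the special $T$.

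Finally, for the claim that $g$ is not a warped product, the obvious splitting $F=M$ fails at once: the circle block $G$ depends on $\mathbf H$, hence on points of $M$, so $g$ cannot be written as $\pi_B^{*}g_B+f^2\,\pi_M^{*}g_M$. The hard part — and the genuine obstacle of the whole statement — is excluding \emph{every} base--fiber decomposition. Here I would invoke the rigidity of warped products, namely that the base is totally geodesic and the fibers totally umbilic, and compute the second fundamental forms (equivalently the O'Neill tensors) of the natural foliations of $\mathbb S^1\times\mathbb S^1\times M$, showing that the umbilicity condition is violated for nonconstant $\mathbf H$. This curvature/structural step, rather than the almost-Kähler verification, is where the real work lies.
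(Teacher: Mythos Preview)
This theorem is not proved in the present paper: it is quoted verbatim from Aazami--Ream, and the authors supply no argument of their own. What the paper \emph{does} record, at the start of Section~\ref{Sec2}, are the explicit expressions
\[
g=g_M+\mathbf{H}\,(d\varphi\otimes d\theta+d\theta\otimes d\varphi)+\tfrac12(1+\mathbf{H}^2)\,d\theta^2+2\,d\varphi^2,
\qquad
\omega=\omega_M+d\theta\wedge d\varphi,
\]
together with the $g$-orthonormal frame $\{X_i,JX_i,T,JT\}$ used later. Your computation of $T^\flat$, of the circle block $G$, and of $\det G=1$ reproduces exactly this (your entry $\mathbf{H}+\tfrac12(1-\mathbf{H})^2$ simplifies to $\tfrac12(1+\mathbf{H}^2)$). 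Your Cayley--Hamilton argument that $J^2=-\id$ and $J^\top GJ=G$ on the two-dimensional block is a clean self-contained verification; the paper simply asserts compatibility of $(g,\omega)$ and moves on. One cosmetic discrepancy: you take $\omega=d\varphi\wedge d\theta+\omega_M$, the paper takes $d\theta\wedge d\varphi+\omega_M$. This flips $J$ to $-J$ on the circle factor and is harmless for the almost-K\"ahler claim, but if you want to match the paper's later formulas (e.g.\ $JT=\tfrac12(\mathbf{H}-1)\partial_\varphi-\partial_\theta$) you should use their sign.

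On the ``not a warped product'' clause: you correctly identify this as the nontrivial remainder and outline a strategy via umbilicity of the fibers, but you do not carry it out. The paper does not carry it out either --- it is part of the cited result --- so there is nothing here to compare against. If your goal is a complete independent proof, that step still needs to be written; if your goal is only to match what the paper itself establishes, your almost-K\"ahler verification already goes beyond it.
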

In the present paper, after Preliminaries, we explain in Section~\ref{Sec2} how to construct extremal almost-K\"ahler metrics of the form~(\ref{form-dual}) dual to (compact) general plane-fronted wave Lorentzian metrics and we obtain the following
 \begin{thm*} {   \upshape (see~Theorem~\ref{thm-extremal})}
 Let $(M,g_M)$ be a closed extremal almost-K\"ahler manifold with a non-constant Hermitian scalar curvature $s^H_M$. Suppose that $\textbf{H}=s^H_M$. Then, on $\mathbb{S}^1\times\mathbb{S}^1\times M$, the metric $g$, of the form~(\ref{form-dual}) dual to a general plane-fronted wave Lorentzian metric, is an extremal almost-K\"ahler metric.
 \end{thm*}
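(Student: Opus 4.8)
The plan is to reduce the extremality of the dual metric $g$ to that of $(M,g_M)$ by exhibiting the symplectic gradient $\grad_\omega s^H$ explicitly as a Killing field. First I would record, from the Preliminaries, the shape of the almost-K\"ahler structure on $\S^1\times\S^1\times M$: the symplectic form is $\omega=d\varphi\wedge d\theta+\omega_M$, the almost-complex structure restricts to $J_M$ on $TM$ and rotates the two angular directions with coefficients depending on $\textbf{H}$, and the metric splits $g$-orthogonally as $g=g_{\mathbb{T}^2}(\textbf{H})\oplus g_M$, where the fibre metric $g_{\mathbb{T}^2}(\textbf{H})$ has constant determinant $1$ in the frame $(\partial_\varphi,\partial_\theta)$. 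I would then substitute the hypothesis $\textbf{H}=s^H_M$ into the Preliminaries formula for the Hermitian scalar curvature of $g$; the point to extract is that $s^H$ is the pullback of an affine function of $s^H_M$, so that $ds^H$ has only $M$-components and, since $\omega$ is block-diagonal, $\grad_\omega s^H=\grad_{\omega_M}s^H$ is a constant multiple of the extremal field $X_M:=\grad_{\omega_M}s^H_M$ of $M$, lifted to be independent of the two angular coordinates.

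The heart of the argument is then to show that this lift $X_M$ is a Killing field for the total metric $g$. Two observations combine. On the one hand, because $\textbf{H}=s^H_M$ is independent of $\varphi$ and $\theta$, the coordinate fields $\partial_\varphi,\partial_\theta$ are Killing for $g$ and the whole metric is invariant under the angular directions. On the other hand, the crucial identity is that $X_M$ annihilates $\textbf{H}$: since a function is always constant along its own Hamiltonian flow, $X_M(\textbf{H})=X_M(s^H_M)=\{s^H_M,s^H_M\}=0$. Consequently $\mathcal{L}_{X_M}g_{\mathbb{T}^2}(\textbf{H})=0$ (the fibre metric depends on the base only through $\textbf{H}$, which $X_M$ preserves, and through the angular one-forms, which $X_M$ fixes), while $\mathcal{L}_{X_M}g_M=0$ because $(M,g_M)$ is extremal, so $X_M$ is Killing on $M$. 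The orthogonality of the splitting then gives $\mathcal{L}_{X_M}g=0$; the same two inputs show $X_M$ preserves $J$ as well, as it preserves $J_M$ by extremality of $M$ and preserves the $\textbf{H}$-dependent rotation on the fibre because $X_M(\textbf{H})=0$. Combining with the first paragraph, $\grad_\omega s^H$ is a constant multiple of the Killing field $X_M$, so $g$ is extremal.

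The step I expect to be the main obstacle is the one black-boxed above: verifying, from the explicit curvature computation in the Preliminaries, that under $\textbf{H}=s^H_M$ the total Hermitian scalar curvature $s^H$ is an affine function of $s^H_M$ and carries no extraneous term. The danger is a contribution of the form $\Delta_{g_M}\textbf{H}=\Delta_{g_M}s^H_M$ coming from the variation of the fibre complex structure over $M$; the symplectic gradient of $\Delta_{g_M}s^H_M$ need not be Killing for a general extremal $M$, so such a term would break the argument. I would control it using that the fibre volume density is constant ($\det g_{\mathbb{T}^2}\equiv1$) and that $\textbf{H}$ is $X_M$-invariant and angular-independent, which should force the dangerous term either to vanish or to reduce to a multiple of $X_M$. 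A secondary, routine check is that passing from ``$\grad_\omega s^H$ is Killing'' to ``$\grad_\omega s^H$ is an infinitesimal isometry of $J$'' is automatic here, since a Hamiltonian field that preserves $g$ preserves the whole almost-K\"ahler structure.
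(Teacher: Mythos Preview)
Your approach is correct and coincides with the paper's. The key input you black-box is settled in the paper's Corollary~\ref{scalar-cur}: for \emph{any} $\textbf{H}$ independent of $\varphi$ one has $s^H=s^H_M$ identically, so the feared $\Delta_{g_M}\textbf{H}$ contribution simply does not arise and your ``affine function of $s^H_M$'' is in fact the identity. With that in hand your block argument---$\mathcal{L}_{X_M}g_M=0$ by extremality of $M$, and $\mathcal{L}_{X_M}$ kills the $\mathbb{T}^2$-block because $X_M(\textbf{H})=\{s^H_M,s^H_M\}=0$ while $\mathcal{L}_{X_M}d\varphi=\mathcal{L}_{X_M}d\theta=0$---is exactly the content of the paper's computation. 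The only cosmetic difference is that the paper carries out the Lie derivative in the adapted frame $\{X_i,JX_i,T,JT\}$ using the expression $g=g_M+T^\flat\otimes T^\flat+(JT)^\flat\otimes(JT)^\flat$ and Cartan's formula, whereas you use the coordinate splitting $g=g_M\oplus g_{\mathbb{T}^2}(\textbf{H})$; both reduce to the single identity $ds^H_M(\grad_{\omega_M}s^H_M)=0$.
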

 We also remark that when $(M,g_M)$ has a constant Hermitian scalar curvature then the metric $g$, of the form~(\ref{form-dual}) dual to a general plane-fronted wave Lorentzian metric has also a constant Hermitian scalar curvature (see~Theorem~\ref{thm-constant}).
 
 In Section~\ref{Sec3}, we study the second-Chern--Einstein condition and we obtain for instance an infinite family of second-Chern--Einstein non-K\"ahler almost-K\"ahler metrics on $\mathbb{S}^1\times\mathbb{S}^1\times \mathbb{S}^2$ of the form~(\ref{form-dual}) dual to a general plane-fronted waves (see also Theorem~\ref{hirzebruch} in dimension 6). 
 
 \begin{thm*}{\upshape (see Theorem~\ref{2-chern-dim-4}})
Let $\Sigma$ be a compact Riemann surface equipped with a Riemannian metric $g_\Sigma$. Suppose that $\textbf{H}$ is a non-constant function on $\Sigma$. Then, the metric $g$, of the form~(\ref{form-dual}) dual to a general plane-fronted wave Lorentzian metric, is a second-Chern--Einstein non-K\"ahler almost-K\"ahler metric on $\mathbb{S}^1\times\mathbb{S}^1\times \Sigma$ if and only if
\begin{equation}\label{eq-H-intro}
\|grad^g \textbf{H}\|^2=2s^H_\Sigma,
\end{equation}
where $grad^g \textbf{H}$ is the gradient of $\textbf{H}$ with respect to the metric $g$ and $s^H_\Sigma$ is twice the Gaussian curvature of $(\Sigma,g_\Sigma)$. In particular, $\Sigma$ is isomorphic to the sphere $\mathbb{S}^2.$ Moreover, with a suitable normalization of $\textbf{H}$, a metric $g_\Sigma$ satisfying Equation~(\ref{eq-H-intro}) always exists.
\end{thm*}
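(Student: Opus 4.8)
The plan is to make the almost-K\"ahler structure $(g,J,\omega)$ completely explicit and then compute the Chern curvature directly, exploiting that the total space is four-dimensional. First I would record an adapted orthonormal coframe: writing $g_\Sigma=e^3\otimes e^3+e^4\otimes e^4$ locally with $\omega_\Sigma=e^3\wedge e^4$, one checks from~(\ref{form-dual}) that $e^1:=\sqrt2\,d\varphi+\tfrac{\textbf H}{\sqrt2}\,d\theta$ and $e^2:=\tfrac1{\sqrt2}\,d\theta$ complete $\{e^3,e^4\}$ to a $g$-orthonormal coframe, and that the symplectic form is $\omega=e^1\wedge e^2+e^3\wedge e^4=d\varphi\wedge d\theta+\omega_\Sigma$. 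In the unitary coframe $\epsilon^1:=e^1+ie^2$, $\epsilon^2:=e^3+ie^4$ a short computation gives $d\epsilon^2=i\alpha\wedge\epsilon^2$, where $\alpha$ is the Levi--Civita connection form of $(\Sigma,g_\Sigma)$ (so $d\alpha=-K\,e^3\wedge e^4$ with $K$ the Gaussian curvature), and $d\epsilon^1=d\textbf H\wedge e^2$. The Nijenhuis tensor is the $(0,2)$-part of the latter, which vanishes if and only if $\textbf H$ is constant; hence the structure is non-K\"ahler precisely because $\textbf H$ is non-constant, which settles the ``non-K\"ahler'' clause.

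Next I would solve the first structure equations for the canonical connection forms $\theta^a_b$ (skew-Hermitian, with torsion equal to the $(0,2)$-part above) and compute the curvature $\Omega^a_b=d\theta^a_b+\theta^a_c\wedge\theta^c_b$. Since the entire construction is invariant under the $\mathbb{S}^1\times\mathbb{S}^1$-action, every curvature quantity descends to a function on $\Sigma$. Contracting with $\omega$ produces the second-Chern--Ricci endomorphism $\Lambda_\omega\Omega\in\mathfrak u(2)$, whose trace is a multiple of $s^H$, and the second-Chern--Einstein condition $R^\nabla(\omega)\propto\omega$ is exactly the vanishing of its trace-free part. I expect this computation to be the main obstacle: one must keep careful track of the torsion terms, of the terms arising from $d(d\textbf H)$ (the $g$-Hessian of $\textbf H$), and of $d\alpha$. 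The mechanism I anticipate is that the off-diagonal components of $\Lambda_\omega\Omega$ vanish automatically by the symmetry, while the trace-free diagonal component collapses, after using $s^H_\Sigma=2K$, to the single scalar identity $\|grad^g\textbf H\|^2=2s^H_\Sigma$, giving both directions of the equivalence at once.

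For the topological conclusion I would integrate this identity over $\Sigma$. Because $g$ is block-diagonal with respect to the splitting into $\mathbb{S}^1\times\mathbb{S}^1$ and $\Sigma$ directions, $grad^g\textbf H$ is tangent to $\Sigma$ and agrees with the gradient of $\textbf H$ on $(\Sigma,g_\Sigma)$; since $\textbf H$ is non-constant, the left-hand side is nonnegative and not identically zero, so
\[
0<\int_\Sigma\|grad^g\textbf H\|^2\,dA=2\int_\Sigma s^H_\Sigma\,dA=8\pi\,\chi(\Sigma)
\]
by Gauss--Bonnet. Hence $\chi(\Sigma)>0$ and $\Sigma$ must be $\mathbb{S}^2$.

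Finally, for existence I would reduce to a rotationally symmetric model on $\mathbb{S}^2$. Writing $g_\Sigma=dr^2+f(r)^2\,d\psi^2$ with $r\in[0,L]$, $f>0$ on $(0,L)$ and the usual pole conditions $f(0)=f(L)=0$, $f'(0)=1$, $f'(L)=-1$, one has $K=-f''/f$, so for radial $\textbf H=\textbf H(r)$ the equation $\|grad^g\textbf H\|^2=2s^H_\Sigma=4K$ becomes the ODE $(\textbf H')^2=-4f''/f$. Choosing $f$ strictly concave on $(0,L)$ with $f''$ vanishing to high enough order at the poles that $-f''/f$ is smooth, nonnegative and vanishes there, the function $\textbf H(r):=\int_0^r\sqrt{-4f''/f}\,ds$ is a smooth, non-constant solution; it extends smoothly across the poles because $\textbf H\sim c\,r^2$ there, and the additive constant and sign are the announced normalization of $\textbf H$. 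As a consistency check, the displayed integral then equals $16\pi=8\pi\,\chi(\mathbb{S}^2)$.
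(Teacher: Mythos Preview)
Your route to the equivalence is different from the paper's but should work. Rather than computing the Chern connection forms from scratch, the paper relies on the general identity (Corollary~\ref{2-chern-formula}) expressing $r-\bigl(\rho^{\nabla}\bigr)^{J,+}$ as a quadratic in the Nijenhuis tensor, together with the explicit $\rho^{\nabla}$ computed in Proposition~\ref{first-chern-4}. Since $\textbf H$ is $\theta$-independent, this gives immediately
\[
r=-dJdu-\tfrac{1}{8}\|\grad^{g}\textbf H\|^{2}\,\omega+\tfrac{1}{4}\|\grad^{g}\textbf H\|^{2}\,d\theta\wedge d\varphi,
\]
from which the condition $r=\lambda\omega$ reads off as~(\ref{eq-H}). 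Your direct structure-equation computation would reach the same endpoint; the paper's advantage is that the Nijenhuis formula is reusable in higher dimensions (indeed it is reused in Section~\ref{Sec3} for the Hirzebruch example). A minor slip: in the paper's conventions $\omega=\omega_{\Sigma}+d\theta\wedge d\varphi$, not $d\varphi\wedge d\theta$, so watch the sign when you do the contraction. Your Gauss--Bonnet argument for $\Sigma\cong\mathbb S^{2}$ is exactly the intended one.

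The real gap is in your existence argument. The ``moreover'' clause asserts that for \emph{any} non-constant $\textbf H$ on $\mathbb S^{2}$, after a suitable rescaling of $\textbf H$, a metric $g_{\Sigma}$ solving~(\ref{eq-H}) exists. Your rotationally symmetric ODE construction goes in the opposite direction: you first choose the profile $f$ and then \emph{define} $\textbf H(r)=\int_{0}^{r}\sqrt{-4f''/f}\,ds$, so you only produce particular axially symmetric pairs $(g_{\Sigma},\textbf H)$, not a metric for a prescribed $\textbf H$. (You also need $f'''$ to vanish at the poles, not just $f''$, in order for $-f''/f$ to vanish there and for $\sqrt{-4f''/f}$ to be smooth; ``high enough order'' hides this.) The paper's argument instead fixes $\textbf H$, writes $g_{\Sigma}=e^{2f}g_{0}$ conformal to the round metric $g_{0}$, and observes that~(\ref{eq-H}) becomes the linear Poisson equation
\[
\|\grad^{g_{0}}\textbf H\|_{0}^{2}=4+4\,\Delta^{g_{0}}f,
\]
which is solvable for $f$ precisely when $\int_{\Sigma}\|\grad^{g_{0}}\textbf H\|_{0}^{2}\,dV_{g_{0}}=4V_{g_{0}}$; this is the announced normalization of $\textbf H$. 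That conformal/Poisson step is what you are missing.
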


  \section{Preliminaries}\label{Sec1}
  
  Let $(M,g)$ be a Riemannian manifold of dimension $2n$. The metric $g$ is almost-K\"ahler if there is a $g$-orthogonal almost-complex structure $J$ i.e. $g(J\cdot,J\cdot)=g(\cdot,\cdot)$ such that the $2$-form $\omega(\cdot,\cdot):=g(J\cdot,\cdot)$ is $d$-closed
 i.e. $d\omega=0$ where $d$ is the exterior derivative. The almost-complex structure $J$ is integrable if and only if the Nijenhuis tensor
 $N$ defined by $$4N(X,Y):=[JX,JY]-[X,Y]-J[JX,Y]-J[X,JY],$$
 vanishes~\cite{MR88770} (here $X,Y$ are vector fields on $M$). 
 For an almost-K\"ahler metric, we have the following
 \begin{equation}\label{AK-relation}
 g\left((D^g_XJ)Y,Z\right)=2g\left( JX,N(Y,Z)\right),
 \end{equation}
 where $D^g$ is the Levi--Civita connection of $g.$ In particular, $D^g$ preserves $J$ if and only if $g$ is K\"ahler. 
 The canonical Hermitian connection $\nabla$ is defined by
 $$\nabla_XY=D^g_XY-\frac{1}{2}J\left(D^g_XJ\right)Y.$$ The connection $\nabla$ preserves the almost-K\"ahler structure $(g,\omega,J)$ and its torsion is given by the Nijenhuis tensor $N.$
 We denote by $R^\nabla$ the curvature of $\nabla$ and we use the convention $R^\nabla_{X,Y}=\nabla_{[X,Y]}-[\nabla_X,\nabla_Y].$
The first-Chern--Ricci form $\rho^\nabla$ is defined by
$$\rho^\nabla(X,Y)=\frac{1}{2}\displaystyle\sum_{i=1}^{2n}g\left(R^\nabla_{X,Y}e_i,Je_i  \right),$$
where $\{e_1,e_2=Je_1,\ldots,e_{2n-1},e_{2n}=Je_{2n-1}\}$ is a $J$-adapted $g$-orthogonal local frame of the tangent bundle.
The $2$-form $\rho^\nabla$ is $d$-closed and it is a representative of $2\pi c_1(TM,J)$ the first Chern class of $M.$ 
On the other hand, the second-Chern--Ricci form $r$ is defined by
$$r(X,Y)=\frac{1}{2}\displaystyle\sum_{i=1}^{2n}g\left(R^\nabla_{e_i,Je_i}X,Y  \right).$$
Equivalently, $r=R^\nabla(\omega).$ The $2$-form $r$ is $J$-invariant but not closed in general.
\begin{defn}
An almost-K\"ahler metric is second-Chern--Einstein if $$r=\lambda\,\omega,$$
for some function $\lambda.$
\end{defn}
We would like to express the difference between $\rho^\nabla$ and $r$ in terms of the Nijenhuis tensor $N$. Denote by $\rho^\star$ the $\star$-Ricci form defined as $$\rho^\star(X,Y)=R^g(\omega)(X,Y)=\frac{1}{2}\sum_{i=1}^{2n}g\left(R^g_{e_i,Je_i}X,Y \right),$$
 where $R^g$ is the curvature of the Levi--Civita connection $D^g.$ Then, from~\cite{MR1782093}, we have
 $$\rho^\nabla(X,Y)=\rho^\star(X,Y)-\frac{1}{4}\sum_{i=1}^{2n}g\left(J(D^g_XJ)(D^g_YJ)e_i,e_i       \right)$$
On the other hand from~\cite[Proposition 1]{Barbaro:2022vn} and Equation(\ref{AK-relation}), it is easy to see that
$$r(X,Y)=\left(\rho^\star\right)^{J,+}(X,Y)+\frac{1}{4}\sum_{i=1}^{2n}g\left((D^g_{e_i}J)(D^g_{Je_i}J)X,Y       \right),$$
where $\left(\cdot\right)^{J,+}$ denotes the $J$-invariant part.
We obtain then
$$r(X,Y)=\left(\rho^\nabla\right)^{J,+}(X,Y)-\frac{1}{4}\sum_{i=1}^{2n}g\left((D^g_XJ)(D^g_YJ)e_i,Je_i       \right)+\frac{1}{4}\sum_{i=1}^{2n}g\left((D^g_{e_i}J)(D^g_{Je_i}J)X,Y       \right),$$
 
From Equation(\ref{AK-relation}), we get the following formula
\begin{cor}\label{2-chern-formula}
Let $(M,g)$ be an almost-K\"ahler manifold. Then,
\begin{align*}
r(X,Y)&=\left(\rho^\nabla\right)^{J,+}(X,Y)+\sum_{i,k=1}^{2n}g\left(JX,N(e_i,e_k)      \right)g\left(Y,N(e_i,e_k)      \right)\\
&+\sum_{i,k=1}^{2n}g\left(N(X,e_k),e_i       \right)g\left(N(Y,e_k),Je_i       \right),
\end{align*}
where $\{e_1,e_2=Je_1,\ldots,e_{2n-1},e_{2n}=Je_{2n-1}\}$ is a $J$-adapted $g$-orthonormal local frame of the tangent bundle.
\end{cor}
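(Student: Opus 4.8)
The plan is to start from the expression for $r(X,Y)$ displayed immediately before the statement, in which $r$ equals $\left(\rho^\nabla\right)^{J,+}(X,Y)$ plus two correction terms that are quartic in $D^gJ$, and to rewrite each correction term purely in terms of $N$ using Equation~(\ref{AK-relation}). The basic move is the same for both terms: whenever an operator $D^g_V J$ is applied to a frame vector $W$, I expand $\left(D^g_V J\right)W=\sum_k g\left((D^g_V J)W,e_k\right)e_k$ and replace each coefficient by $g\left((D^g_V J)W,e_k\right)=2g\left(JV,N(W,e_k)\right)$ via~(\ref{AK-relation}). Applying this twice to each quartic term reduces it to a double sum of products of two Nijenhuis pairings, after which only elementary identities are needed: antisymmetry $N(X,Y)=-N(Y,X)$, the type relation $N(JX,Y)=N(X,JY)=-JN(X,Y)$ (hence $N(JX,JY)=-N(X,Y)$), orthogonality of $J$, and $J(Je_i)=-e_i$.

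For the term $\tfrac14\sum_i g\left((D^g_{e_i}J)(D^g_{Je_i}J)X,Y\right)$ I would expand $(D^g_{Je_i}J)X$ first; since $J(Je_i)=-e_i$, Equation~(\ref{AK-relation}) gives $(D^g_{Je_i}J)X=-2\sum_k g\left(e_i,N(X,e_k)\right)e_k$, and a second application to $(D^g_{e_i}J)e_k$ gives $g\left((D^g_{e_i}J)e_k,Y\right)=2g\left(Je_i,N(e_k,Y)\right)$. Collecting the constants and using $N(e_k,Y)=-N(Y,e_k)$ produces exactly $\sum_{i,k}g\left(N(X,e_k),e_i\right)g\left(N(Y,e_k),Je_i\right)$, the second Nijenhuis term in the statement; this comes out directly with no further work.

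The term $-\tfrac14\sum_i g\left((D^g_XJ)(D^g_YJ)e_i,Je_i\right)$ is where the one genuinely non-mechanical step occurs. The same double application of~(\ref{AK-relation}), followed by $N(e_k,Je_i)=JN(e_i,e_k)$ and the orthogonality of $J$, yields $-\sum_{i,k}g\left(X,N(e_i,e_k)\right)g\left(JY,N(e_i,e_k)\right)$, whereas the statement asks for $\sum_{i,k}g\left(JX,N(e_i,e_k)\right)g\left(Y,N(e_i,e_k)\right)$. To reconcile the two I would observe that the symmetric bilinear form $B(V,W):=\sum_{i,k}g\left(V,N(e_i,e_k)\right)g\left(W,N(e_i,e_k)\right)$ is $J$-invariant, i.e.\ $B(JV,JW)=B(V,W)$. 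The cleanest justification is that $B$ is a frame-independent contraction of $N$ with itself: evaluating it on the orthonormal frame $\{Je_i\}$ in place of $\{e_i\}$ and using $N(Je_i,e_k)=-JN(e_i,e_k)$ together with the completeness relation $\sum_i g(A,e_i)g(B,e_i)=g(A,B)$ applied to the endomorphism $W\mapsto N(W,e_k)$ gives the invariance. Since $J^2=-\id$, this is equivalent to $B(X,JY)=-B(JX,Y)$, which converts the expression above into precisely the first Nijenhuis term of the statement. Adding the two rewritten correction terms to $\left(\rho^\nabla\right)^{J,+}(X,Y)$ then yields the claimed formula; the main bookkeeping hazard, beyond the $J$-invariance step, is keeping track of the factors of $2$ and the placement of $J$ across the two applications of~(\ref{AK-relation}).
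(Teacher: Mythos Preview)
Your proposal is correct and is exactly the approach the paper intends: the paper merely says ``From Equation~(\ref{AK-relation}), we get the following formula'' and leaves the verification to the reader, and you have carried out precisely that verification. Your handling of both correction terms, including the $J$-invariance of the bilinear form $B$ needed to reconcile the sign and placement of $J$ in the first term, is sound.
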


The trace of $\rho^\nabla$ and $r$ with respect to $\omega$ is equal to the Hermitian scalar curvature
$$s^H=\sum_{i,k=1}^{2n}\rho^\nabla(e_i,Je_i)=\sum_{i=1}^{2n}r(e_i,Je_i).$$
On the other hand the Riemannian scalar curvature $s^g$ is defined as $$s^g=\displaystyle\sum_{i,j=1}^ng\left(R^g_{e_i,e_j}e_i,e_j\right),$$
and coincides with $s^H$ when $g$ is K\"ahler.

An almost-K\"ahler metric $g$ is extremal~\cite{MR2747965} if the symplectic gradient $grad_\omega s^H=J\,grad^gs^H$ of the Hermitian scalar curvature $s^H$ is a Killing vector field with respect to $g$ (here $grad^g$ denotes the $g$-Riemannian gradient).

  \section{Extremal almost-K\"ahler metrics dual to general plane-fronted wave Lorentzian metrics}\label{Sec2}

 Let $(M,g_M,\omega_M,J_M)$ be a closed almost-K\"ahler manifold of dimension $2n$. On the manifold $M$, we use the Darboux coordinates $\{z_1,\ldots,z_n,t_1,\ldots,t_n\}$ defined on an open set. The symplectic form $\omega_M$ has the form 
 \begin{equation*}
 \omega_M=\displaystyle\sum_{i=1}^ndz_i\wedge dt_i.
 \end{equation*}
 Let $\varphi,\theta$ denote the standard angular coordinates on $\mathbb{S}^1\times\mathbb{S}^1$. Let $\textbf{H}$ be an arbitrary smooth function
 on $\mathbb{S}^1\times M$ that is independent of the angular coordinate $\varphi.$ Then, on $\mathbb{S}^1\times\mathbb{S}^1\times M$,
 we consider the almost-K\"ahler metric $g$ of the form~(\ref{form-dual}) dual to a (compact) general plane-fronted wave Lorentzian metric. Then, the metric $g$ can be expressed as 
 \begin{equation}\label{wave-metric}
 g=g_M+\textbf{H}\left(d\varphi\otimes d\theta+ d\theta\otimes d\varphi\right)+\frac{1}{2}(1+\textbf{H}^2)\,d\theta^2+2\,d\varphi^2.
 \end{equation}
 
 The almost-K\"ahler metric $g$ is compatible with the following symplectic form $\omega$ (i.e. $(g,\omega)$ is an almost-K\"ahler structure)
 \begin{equation}\label{symp-form}
 \omega=\omega_M+d\theta\wedge d\varphi.
 \end{equation}
 The metric $g$ is K\"ahler if and only if $\textbf{H}$ is constant on $M$ and $g_M$ is K\"ahler~\cite[Theorem 2]{Aazami:2022th}.
 
 We would like to compute the first-Chern--Ricci form $\rho^\nabla$ and the Hermitian scalar curvature $s^H$ on the almost-K\"ahler manifold
 $(\mathbb{S}^1\times\mathbb{S}^1\times M,g,\omega)$. We apply the formulae computed in~\cite{MR2747965} using Darboux coordinates. 
 
 \begin{prop}\label{chern-ricci}
 Let $\rho^\nabla$ be the first-Chern--Ricci form of the almost-K\"ahler manifold $(\mathbb{S}^1\times\mathbb{S}^1\times M,g,\omega)$. Then,
  \begin{eqnarray*}
  \rho^\nabla=\rho^\nabla_M+\frac{1}{2}\displaystyle\sum_{i=1}^n\textbf{H}_{\theta z_i}d\theta\wedge dz_i+\frac{1}{2}\displaystyle\sum_{i=1}^n\textbf{H}_{\theta t_i}d\theta\wedge dt_i,
  \end{eqnarray*}
  where $\rho^\nabla_M$ is the first-Chern--Ricci form of $(M,g_M,\omega_M,J_M)$. Here we use the notation $\textbf{H}_{\theta z_i}=\frac{\partial^2 \textbf{H}}{\partial\theta\partial z_i},$ etc.
 \end{prop}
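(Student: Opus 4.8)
The plan is to introduce global Darboux-type coordinates on the whole space and to feed the resulting data into the Darboux-coordinate formula for the first-Chern--Ricci form established in~\cite{MR2747965}. Concretely, setting $z_{n+1}:=\theta$ and $t_{n+1}:=\varphi$, the symplectic form~(\ref{symp-form}) becomes the standard form $\omega=\sum_{i=1}^{n+1}dz_i\wedge dt_i$, so that $\{z_1,\dots,z_{n+1},t_1,\dots,t_{n+1}\}$ are Darboux coordinates on $\mathbb{S}^1\times\mathbb{S}^1\times M$. In these coordinates the Gram matrix $G$ of the metric $g$ in~(\ref{wave-metric}) is block diagonal: the $M$-block is the Gram matrix of $g_M$, while the $(\theta,\varphi)$-block equals $\left(\begin{smallmatrix}\frac12(1+\textbf{H}^2)&\textbf{H}\\ \textbf{H}&2\end{smallmatrix}\right)$, whose determinant is $1$. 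Since $\omega$ and $g$ are both block diagonal, the compatible almost-complex structure $J=-G^{-1}\Omega$ (with $\Omega$ the standard symplectic matrix) is block diagonal as well: it restricts to $J_M$ on $TM$ and, on the $(\theta,\varphi)$-plane, sends $\partial_\theta\mapsto -\textbf{H}\,\partial_\theta+\tfrac12(1+\textbf{H}^2)\,\partial_\varphi$ and $\partial_\varphi\mapsto -2\,\partial_\theta+\textbf{H}\,\partial_\varphi$.

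Next I would record the two structural simplifications that make the substitution tractable. First, the $(\theta,\varphi)$-block has determinant $1$, so $\det G=\det(g_M)$ and every contribution of the formula that depends only on $\log\det G$ reduces to its $M$-counterpart. Second, $g_M$ and $J_M$ are independent of $\theta,\varphi$, and the twisted block is independent of $\varphi$, so the only derivatives that can enter a correction are those of $\textbf{H}$ along $\theta$ and along the $M$-directions. Conceptually this reflects the $J$- and $g$-orthogonal splitting $T=TM\oplus\operatorname{span}(\partial_\theta,\partial_\varphi)$, under which the anti-canonical bundle factors as the pullback of the $M$-anti-canonical bundle tensored with the determinant line of the twisted $(\theta,\varphi)$-plane; substituting the block-diagonal $G$ and $J$, the purely $M$-directional part of the formula reproduces $\rho^\nabla_M$ verbatim, while any term that would live purely in the $(\theta,\varphi)$-plane vanishes (in particular there is no $d\theta\wedge d\varphi$ component).

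The heart of the computation is the coupling between the twisted $(\theta,\varphi)$-block and the $M$-directions, produced by the simultaneous dependence of $\textbf{H}$ on $\theta$ and on the point of $M$; I expect this to be the main obstacle. One must carefully expand the first- and second-order terms of the formula, check that the $d\varphi$-contributions drop out by $\varphi$-independence, that the terms quadratic in the first derivatives of $\textbf{H}$ cancel, and that precisely the mixed second derivatives survive, assembling into $\tfrac12\,d\theta\wedge\big(\sum_i\textbf{H}_{\theta z_i}\,dz_i+\sum_i\textbf{H}_{\theta t_i}\,dt_i\big)=\tfrac12\,d\theta\wedge d_M(\partial_\theta\textbf{H})$, with the stated coefficient $\tfrac12$. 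As consistency checks I would verify that the resulting $2$-form is $d$-closed (using $d_M^2=0$ and $\varphi$-independence the correction term is closed, matching the fact that $\rho^\nabla$ represents $2\pi c_1(TM,J)$), and that it degenerates to $\rho^\nabla_M$ exactly when $\textbf{H}$ is constant, i.e.\ in the Kähler case.
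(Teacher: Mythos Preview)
Your proposal is correct and follows the same route as the paper: the paper's proof consists solely of the sentence preceding the proposition, namely that one applies the Darboux-coordinate formulae of~\cite{MR2747965}, and your plan is precisely an elaboration of that application (extending the Darboux chart by $z_{n+1}=\theta$, $t_{n+1}=\varphi$, exploiting the block-diagonal structure of $G$ and $J$, and the fact that the $(\theta,\varphi)$-block has determinant $1$). The only quibble is the word ``global'': the Darboux coordinates on $M$ are of course only local, but since $\rho^\nabla$ is computed pointwise this is harmless.
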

 
 \begin{cor}\label{scalar-cur}
 Let $s^H$ be the Hermitian scalar curvature of the almost-K\"ahler manifold $(\mathbb{S}^1\times\mathbb{S}^1\times M,g,\omega)$. Then
 $$s^H=s^H_M,$$
 where $s^H_M$ is the Hermitian scalar curvature of $(M,g_M,\omega_M,J_M)$.
 \end{cor}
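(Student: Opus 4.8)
The plan is to obtain $s^H$ by tracing the first-Chern--Ricci form supplied by Proposition~\ref{chern-ricci} against $\omega$. Recall that $s^H=\sum_{i=1}^{2n+2}\rho^\nabla(e_i,Je_i)$ for any $J$-adapted $g$-orthonormal frame $\{e_i\}$ of the $(2n+2)$-dimensional manifold $\mathbb{S}^1\times\mathbb{S}^1\times M$, so the whole computation reduces to evaluating
$$\rho^\nabla=\rho^\nabla_M+\frac{1}{2}\sum_{i=1}^n \textbf{H}_{\theta z_i}\,d\theta\wedge dz_i+\frac{1}{2}\sum_{i=1}^n \textbf{H}_{\theta t_i}\,d\theta\wedge dt_i$$
on a well-chosen frame and summing.

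First I would record the block structure of the geometry. In the metric~(\ref{wave-metric}) the function $\textbf{H}$ enters only as a coefficient of $d\varphi\otimes d\theta$, $d\theta\otimes d\varphi$ and $d\theta^2$, so all of its $\textbf{H}$-dependent terms pair the directions $\partial_\varphi,\partial_\theta$ with one another; consequently $g$ is block-diagonal with respect to the splitting $T(\mathbb{S}^1\times\mathbb{S}^1\times M)=T(\mathbb{S}^1\times\mathbb{S}^1)\oplus TM$, and so is $\omega=\omega_M+d\theta\wedge d\varphi$ by~(\ref{symp-form}). Since $J$ is determined by $\omega(\cdot,\cdot)=g(J\cdot,\cdot)$ and $g$ is non-degenerate, $J$ too is block-diagonal: it preserves $T(\mathbb{S}^1\times\mathbb{S}^1)$ and, because $g|_{TM}=g_M$ and $\omega|_{TM}=\omega_M$, restricts to $J_M$ on $TM$. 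I would therefore use a $J$-adapted $g$-orthonormal frame adapted to this splitting, namely a pair $\{u,Ju\}$ spanning the $\{\partial_\varphi,\partial_\theta\}$-directions together with a $J_M$-adapted $g_M$-orthonormal frame of $TM$.

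The key point is that the two $\textbf{H}$-dependent sums are trace-free against such a frame. Each of their terms is of the form $d\theta\wedge dz_i$ or $d\theta\wedge dt_i$, pairing the first-block covector $d\theta$ with an $M$-block covector. Because $J$ preserves each block, every pair $(e,Je)$ lies inside a single block: if $e$ lies in $T(\mathbb{S}^1\times\mathbb{S}^1)$ then $dz_i(e)=dt_i(e)=0$, while if $e$ lies in $TM$ then $d\theta(e)=0$. In either case $(d\theta\wedge dz_i)(e,Je)=(d\theta\wedge dt_i)(e,Je)=0$, so these terms contribute nothing to $\sum_i\rho^\nabla(e_i,Je_i)$. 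The form $\rho^\nabla_M$, being supported on $TM$, vanishes on $(u,Ju)$ and restricts to the first-Chern--Ricci form of $(M,g_M,\omega_M,J_M)$ on the $TM$-frame, whose trace is by definition $s^H_M$. Summing the two block contributions then gives $s^H=s^H_M$.

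I expect the only step requiring any care to be the verification that $J$ is block-diagonal; once the splittings of $g$ and $\omega$ are noted this is immediate, and the vanishing of the mixed $\textbf{H}$-terms under the trace is a purely formal consequence of $J$ preserving the two blocks.
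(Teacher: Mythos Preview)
Your proposal is correct and is exactly the natural way to flesh out the paper's one-line proof, which simply reads ``This is a direct consequence of Proposition~\ref{chern-ricci}.'' Your block-diagonality argument for $J$ is valid and makes precise why the mixed terms $d\theta\wedge dz_i$, $d\theta\wedge dt_i$ are killed by the $\omega$-trace; an equivalent and slightly quicker route is to note that in the Darboux coordinates the Poisson bivector dual to $\omega=\omega_M+d\theta\wedge d\varphi$ splits as $\sum_i\partial_{z_i}\wedge\partial_{t_i}+\partial_\theta\wedge\partial_\varphi$, so any $2$-form pairing a $d\theta$ with a $dz_i$ or $dt_i$ contracts to zero without ever invoking $J$ or an orthonormal frame.
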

 \begin{proof}
 This is a direct consequence of Proposition~\ref{chern-ricci}.
 \end{proof}
 
As a consequence of Corollary~\ref{scalar-cur}, we obtain the following
 
 \begin{thm}\label{thm-constant}
 Let $(M,g_M,\omega_M,J_M)$ be a closed almost-K\"ahler manifold of constant Hermitian scalar curvature. Then, on $\mathbb{S}^1\times\mathbb{S}^1\times M$, the metric $g$, of the form~(\ref{form-dual}) dual to a general plane-fronted wave Lorentzian metric,
has a constant Hermitian scalar curvature.
 \end{thm}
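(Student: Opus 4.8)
The plan is to obtain the statement as an immediate consequence of Corollary~\ref{scalar-cur}. That corollary identifies the Hermitian scalar curvature of $(\mathbb{S}^1\times\mathbb{S}^1\times M,g,\omega)$ with the Hermitian scalar curvature $s^H_M$ of the base factor $(M,g_M,\omega_M,J_M)$, pulled back to the product; in particular $s^H=s^H_M$ depends only on the coordinates of $M$. Since the hypothesis is precisely that $s^H_M$ is constant, the equality $s^H=s^H_M$ forces $s^H$ to be constant on $\mathbb{S}^1\times\mathbb{S}^1\times M$, and the proof is complete. Thus all the geometric content has already been absorbed into Corollary~\ref{scalar-cur}, and I would simply invoke it.

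To see that the deduction is honest rather than circular, I would recall why Corollary~\ref{scalar-cur} holds, as this is the step carrying the actual computation. By Proposition~\ref{chern-ricci} the first-Chern--Ricci form of the total space is $\rho^\nabla=\rho^\nabla_M+\tfrac12\sum_{i=1}^n \textbf{H}_{\theta z_i}\,d\theta\wedge dz_i+\tfrac12\sum_{i=1}^n \textbf{H}_{\theta t_i}\,d\theta\wedge dt_i$, and the Hermitian scalar curvature is the trace $s^H=\sum_i\rho^\nabla(e_i,Je_i)$ against $\omega=\omega_M+d\theta\wedge d\varphi$. The two correction terms pair the one-form $d\theta$ with the purely $M$-directional forms $dz_i,dt_i$; since $d\theta$ belongs to the symplectic plane spanned by $d\theta\wedge d\varphi$ while $dz_i,dt_i$ lie in the $M$-factor, these mixed terms are $\omega$-traceless. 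Hence only $\rho^\nabla_M$ survives under the trace, giving $s^H=s^H_M$.

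Accordingly, there is essentially no obstacle left at the level of the present theorem: the work was done upstream. The genuinely hard part is Proposition~\ref{chern-ricci} itself, namely computing $\rho^\nabla$ for the dual metric $g$ in~(\ref{wave-metric}). I expect that computation to require an adapted $J$-frame for the splitting $T(\mathbb{S}^1\times\mathbb{S}^1\times M)=\langle\partial_\varphi,\partial_\theta\rangle\oplus TM$, the Levi--Civita connection $D^g$ and the tensor $D^gJ$ (equivalently the Nijenhuis tensor via~(\ref{AK-relation})), and then the curvature of $\nabla=D^g-\tfrac12 J(D^gJ)$; but since that proposition is available to me here, the theorem reduces to the one-line argument above.
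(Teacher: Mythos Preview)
Your proposal is correct and matches the paper's approach exactly: the paper simply states the theorem as an immediate consequence of Corollary~\ref{scalar-cur} without further proof, and you have reproduced that deduction (with some extra justification of why the mixed terms in $\rho^\nabla$ are $\omega$-traceless, which the paper leaves implicit).
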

 
 We also get the following as a consequence of Proposition~\ref{chern-ricci}.

 \begin{cor}
Let $(M,g_M,\omega_M,J_M)$ be a closed non-K\"ahler almost-K\"ahler manifold with a zero first-Chern--Ricci form $\rho^\nabla_M=0$. Suppose that $\textbf{H}$ is a function on $M$. Then, the metric $g$, of the form~(\ref{form-dual}) dual to a general plane-fronted wave Lorentzian metric, defines on $\mathbb{S}^1\times\mathbb{S}^1\times M$ a non-K\"ahler almost-K\"ahler metric with vanishing first-Chern--Ricci form $\rho^\nabla=0.$
 \end{cor}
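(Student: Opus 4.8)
The plan is to read the result off directly from Proposition~\ref{chern-ricci}, together with the K\"ahlerness criterion recorded just after Equation~(\ref{symp-form}). Note first that, since $\textbf{H}$ is here assumed to be a function on $M$ in the special case of the general construction, the metric $g$ of the form~(\ref{form-dual}) is automatically almost-K\"ahler, compatible with the symplectic form $\omega$ of~(\ref{symp-form}); so only the two displayed assertions, vanishing of $\rho^\nabla$ and non-K\"ahlerness, need to be checked.

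For the first assertion I would argue as follows. Because $\textbf{H}$ depends only on the coordinates of $M$, it is in particular independent of the angular coordinate $\theta$, so every mixed second derivative involving $\theta$ vanishes, namely $\textbf{H}_{\theta z_i}=\textbf{H}_{\theta t_i}=0$ for all $i$. Substituting this into the formula of Proposition~\ref{chern-ricci} makes the two correction sums vanish identically, leaving
\[
\rho^\nabla=\rho^\nabla_M.
\]
Invoking the hypothesis $\rho^\nabla_M=0$ then gives $\rho^\nabla=0$ on $\mathbb{S}^1\times\mathbb{S}^1\times M$, as claimed.

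For the non-K\"ahler assertion I would appeal to the criterion stated after~(\ref{symp-form}): the metric $g$ is K\"ahler if and only if $\textbf{H}$ is constant on $M$ \emph{and} $g_M$ is K\"ahler. Since $(M,g_M,\omega_M,J_M)$ is assumed non-K\"ahler, the second condition already fails, so $g$ cannot be K\"ahler regardless of the choice of $\textbf{H}$. This shows that $g$ is a genuine non-K\"ahler almost-K\"ahler metric with $\rho^\nabla=0$.

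I do not anticipate any real obstacle here: the statement is a clean corollary of Proposition~\ref{chern-ricci}, and the only point requiring a moment's care is the vanishing of the $\theta$-derivatives of $\textbf{H}$, which is immediate once one observes that $\textbf{H}$ is a function on $M$ alone.
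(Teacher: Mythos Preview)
Your proposal is correct and matches the paper's approach exactly: the paper states this corollary as an immediate consequence of Proposition~\ref{chern-ricci} without giving any further argument, and your write-up supplies precisely the two observations needed --- vanishing of the $\theta$-derivatives of $\textbf{H}$ to kill the extra terms, and the K\"ahlerness criterion following~(\ref{symp-form}) to rule out K\"ahlerness via the non-K\"ahler assumption on $g_M$.
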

 
  Now, we would like to discuss the construction on $\mathbb{S}^1\times\mathbb{S}^1\times M$ of extremal almost-K\"ahler metrics with non-constant Hermitian scalar curvature. 
  \begin{thm}\label{thm-extremal}
 Let $(M,g_M,\omega_M,J_M)$ be a closed extremal almost-K\"ahler manifold with a non-constant Hermitian scalar curvature $s^H_M$. Suppose that $\textbf{H}=s^H_M$. Then, on $\mathbb{S}^1\times\mathbb{S}^1\times M$, the metric $g$, of the form~(\ref{form-dual}) dual to a general plane-fronted wave Lorentzian metric, is an extremal almost-K\"ahler metric.
 \end{thm}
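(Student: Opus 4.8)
The plan is to verify directly that the symplectic gradient $grad_\omega s^H = J\,grad^g s^H$ is a Killing field for $g$, which is precisely the definition of extremality. By Corollary~\ref{scalar-cur} we have $s^H = s^H_M$, and since by hypothesis $\textbf{H}=s^H_M$ is a function on $M$ alone, $s^H$ is pulled back from $M$; in particular $d s^H$ has no $d\varphi$ or $d\theta$ component. Together with the block form $\omega = \omega_M + d\theta\wedge d\varphi$ from~(\ref{symp-form}), this should force the symplectic gradient to be tangent to the $M$-factor and to coincide with the lift of $grad_{\omega_M} s^H_M$ on $(M,g_M)$.

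To make this precise I would use the coordinates $\{z_1,\dots,z_n,t_1,\dots,t_n,\theta,\varphi\}$, in which $\omega = \sum_i dz_i\wedge dt_i + d\theta\wedge d\varphi$ is already in Darboux form. Writing $V = grad_\omega s^H$, characterized by $\iota_V\omega = -ds^H$, and expanding $V$ in this frame, the conjugate pair $(\theta,\varphi)$ drops out because $ds^H$ has no $d\theta,d\varphi$ part, leaving
\[
V = \sum_i \textbf{H}_{z_i}\,\partial_{t_i} - \sum_i \textbf{H}_{t_i}\,\partial_{z_i},
\]
which is exactly the lift of the symplectic gradient $grad_{\omega_M}s^H_M = J_M\,grad^{g_M}s^H_M$ of $M$. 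Two consequences are then essential. First, since $(M,g_M)$ is extremal, this field is Killing for $g_M$, and because $V$ is tangent to the fibers one gets $\mathcal{L}_V g_M = 0$ for the pulled-back tensor as well (the Lie derivative of the pullback being the pullback of the Lie derivative). Second, since the symplectic gradient is tangent to the level sets of $s^H$, it follows that $V\textbf{H} = V s^H_M = ds^H_M(V) = -\omega_M(V,V) = 0$.

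With these two facts in hand I would compute $\mathcal{L}_V g$ term by term from the explicit expression~(\ref{wave-metric}). As $V$ has no $\partial_\varphi,\partial_\theta$ component, $\iota_V d\varphi = \iota_V d\theta = 0$ and hence $\mathcal{L}_V d\varphi = \mathcal{L}_V d\theta = 0$, so every symmetric product of $d\varphi$ and $d\theta$ is $\mathcal{L}_V$-invariant. Moreover $V$ annihilates the coefficients $\textbf{H}$ and $\tfrac12(1+\textbf{H}^2)$ by the identity $V\textbf{H}=0$, so the cross term $\textbf{H}(d\varphi\otimes d\theta + d\theta\otimes d\varphi)$ as well as $\tfrac12(1+\textbf{H}^2)\,d\theta^2$ and $2\,d\varphi^2$ all have vanishing Lie derivative. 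Combined with $\mathcal{L}_V g_M = 0$ this yields $\mathcal{L}_V g = 0$, so $V$ is Killing for $g$ and $g$ is extremal.

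The main obstacle lies in the second step: one must be certain that the passage from $M$ to the total space neither introduces $\partial_\varphi$ or $\partial_\theta$ components into the symplectic gradient nor spoils the identity $V\textbf{H}=0$. Both hinge on $s^H$ being pulled back from $M$ and on the clean block form of $\omega$; the extremality hypothesis on $(M,g_M)$ enters only to promote ``$V$ is the lift of a vector field on $M$'' to ``$\mathcal{L}_V g_M = 0$.'' Everything else reduces to the elementary Cartan-calculus observations above, so no genuine analytic difficulty is involved.
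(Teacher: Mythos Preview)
Your argument is correct and rests on the same two facts the paper uses: the symplectic gradient $V=grad_\omega s^H$ is tangent to the $M$-factor and lifts $grad_{\omega_M}s^H_M$, and $V\textbf{H}=ds^H_M(V)=0$. The difference is purely in how the Killing equation is verified. The paper expresses $g$ as $g_M+T^\flat\otimes T^\flat+(JT)^\flat\otimes(JT)^\flat$ in the $J$-adapted orthonormal frame $\{X_1,JX_1,\dots,T,JT\}$ and then evaluates $\mathfrak{L}_Vg$ on each pair of frame vectors, using Cartan's formula together with the identity $-d\theta=T^\flat+(JT)^\flat$; the vanishing ultimately comes from $ds^H_M(V)=0$. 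You instead stay with the explicit coordinate form~(\ref{wave-metric}) and observe that $\mathcal{L}_V$ annihilates $d\theta$, $d\varphi$ and the scalar coefficients directly. Your route is a bit more economical, since it avoids introducing the $T,JT$ frame and the case-by-case evaluation on pairs; the paper's route has the advantage of making the underlying almost-Hermitian geometry (and the role of the frame) more visible, which matters later in Section~\ref{Sec3} where the same frame is reused for Nijenhuis and curvature computations.
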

 \begin{proof}
 We need to prove that $grad_\omega s^H$ is a Killing vector field of $g$, where $\omega$ is given by~(\ref{symp-form}). From Corollary~\ref{scalar-cur}, it follows that $s^H=s^H_M$ and so $grad_\omega s^H=grad_\omega s_M^H.$ Consider the $J_M$-adapted $g_M$-orthonormal local frame of the tangent bundle of $M$ $\{X_1,J_MX_1,\ldots,X_n,J_MX_n\}$. We define as in~\cite[Theorem 2]{Aazami:2022th}
 $$T=\frac{1}{2}(\textbf{H}+1)\partial_\varphi-\partial_\theta.$$ 

The almost-complex structure $J_M$ is then extended to an almost-complex structure $J$ on $\mathbb{S}^1\times\mathbb{S}^1\times M$ by defining
 $$JT=\frac{1}{2}(\textbf{H}-1)\partial_\varphi-\partial_\theta.$$ The frame $\{X_1,JX_1,\ldots,X_n,JX_n,T,JT\}$ defines on $\mathbb{S}^1\times\mathbb{S}^1\times M$ a $J$-adapted $g$-orthonormal local frame.
The $g$-Riemannian duals of $T$ and $JT$ are the $1$-forms 
\begin{align}
T^\flat&=\frac{1}{2}(\textbf{H}-1)d\theta+d\varphi,\label{T-dual}\\
(JT)^\flat&=-\frac{1}{2}(\textbf{H}+1)d\theta-d\varphi.\label{JT-dual}
\end{align} 
 Then, the Riemannian metric $g$ is given by
 \begin{equation}\label{g-express-T}
 g=g_M+T^\flat\otimes T^\flat+(JT)^\flat\otimes (JT)^\flat.
 \end{equation}
We prove now that $\mathfrak{L}_{grad_\omega s_M^H}g=0$, where $\mathfrak{L}$ is the Lie derviative. First, for any two vectors $X_i,X_j$ in $\{X_1,JX_1,\ldots,X_n,JX_n\}$, we have
\begin{equation*}
\left(\mathfrak{L}_{grad_\omega s_M^H}g\right)(X_i,X_j)=\left((\mathfrak{L}_{grad_\omega s_M^H}T^\flat)\odot T^\flat+\mathfrak{L}_{grad_\omega s_M^H}(JT)^\flat\odot (JT)^\flat\right)(X_i,X_j)=0,
\end{equation*}
 using~(\ref{g-express-T}) and because $grad_\omega s_M^H$ is a Killing vector field of $g_M$. For the same reasons 
 $$\left(\mathfrak{L}_{grad_\omega s_M^H}g\right)(T,X_i)=\left(\mathfrak{L}_{grad_\omega s_M^H}g\right)(JT,X_i)=0.$$
 Moreover using Cartan formula and the fact that $g(grad_\omega s_M^H,T)=g(grad_\omega s_M^H,JT)=0$ we get
 \begin{align*}
\left(\mathfrak{L}_{grad_\omega s_M^H}g\right)(T,T)&=\left((\mathfrak{L}_{grad_\omega s_M^H}T^\flat)\odot T^\flat+\mathfrak{L}_{grad_\omega s_M^H}(JT)^\flat\odot (JT)^\flat\right)(T,T),\\
&=2\left(\mathfrak{L}_{grad_\omega s_M^H}T^\flat\right)(T),\\
&=2\,dT^\flat(grad_\omega s_M^H,T),\\
&=\left(d\textbf{H}\wedge d\theta\right)(grad_\omega s_M^H,T),\\
&=-d\textbf{H}\wedge \left(T^\flat+ (JT)^\flat \right)(grad_\omega s_M^H,T),\\
&=-ds^H_M\left(grad_\omega s_M^H\right)=0.
\end{align*}
Here we use the assumption that $\textbf{H}=s^H_M$ and the fact that $-d\theta=T^\flat+ (JT)^\flat$ which follows from~(\ref{T-dual}) and~(\ref{JT-dual}).
In a similar way,
 \begin{align*}
\left(\mathfrak{L}_{grad_\omega s_M^H}g\right)(JT,JT)&=\left((\mathfrak{L}_{grad_\omega s_M^H}T^\flat)\odot T^\flat+\mathfrak{L}_{grad_\omega s_M^H}(JT)^\flat\odot (JT)^\flat\right)(JT,JT),\\
&=2\left(\mathfrak{L}_{grad_\omega s_M^H}(JT)^\flat\right)(JT),\\
&=2\,d(JT)^\flat(grad_\omega s_M^H,JT),\\
&=-\left(d\textbf{H}\wedge d\theta\right)(grad_\omega s_M^H,JT),\\
&=d\textbf{H}\wedge \left(T^\flat+ (JT)^\flat \right)(grad_\omega s_M^H,JT),\\
&=ds^H_M\left(grad_\omega s_M^H\right)=0.
\end{align*}
 \begin{align*}
\left(\mathfrak{L}_{grad_\omega s_M^H}g\right)(T,JT)&=\left((\mathfrak{L}_{grad_\omega s_M^H}T^\flat)\odot T^\flat+\mathfrak{L}_{grad_\omega s_M^H}(JT)^\flat\odot (JT)^\flat\right)(T,JT),\\
&=\left(\mathfrak{L}_{grad_\omega s_M^H}T^\flat\right)(JT)+\left(\mathfrak{L}_{grad_\omega s_M^H}(JT)^\flat\right)(T),\\
&=dT^\flat(grad_\omega s_M^H,JT)+d(JT)^\flat(grad_\omega s_M^H,T),\\
&=\frac{1}{2}\left(d\textbf{H}\wedge d\theta\right)(grad_\omega s_M^H,JT)-\frac{1}{2}\left(d\textbf{H}\wedge d\theta\right)(grad_\omega s_M^H,T),\\
&=-\frac{1}{2}d\textbf{H}\wedge \left(T^\flat+ (JT)^\flat \right)(grad_\omega s_M^H,JT)+\frac{1}{2}d\textbf{H}\wedge \left(T^\flat+ (JT)^\flat \right)(grad_\omega s_M^H,T)=0.\\
\end{align*}
The theorem follows.
\end{proof}
 
  \section{Second-Chern--Einstein almost-K\"ahler metrics dual to general plane-fronted wave Lorentzian metrics}\label{Sec3}
  \subsection{The four-dimensional case}
 Let $\Sigma$ be a compact Riemann surface with a Riemannian metric $g_\Sigma$ and a complex structure $J_\Sigma$. In this section, we would like to investigate the existence of second-Chern--Einstein
 almost-K\"ahler metrics of the form~(\ref{form-dual}) dual to a general plane-fronted wave Lorentzian metric on $\mathbb{S}^1\times\mathbb{S}^1\times \Sigma$. On $\Sigma$, we use the isothermal coordinates $\{x,y\}$ so that $$g_\Sigma=e^{2u}\left(dx\otimes dx+dy\otimes dy \right),$$
 for some function $u=u(x,y)$.

We have the $g_\Sigma$-unit vectors $$X=e^{-u}\partial_x,\,J_\Sigma X=e^{-u}\partial_y.$$
 
 The complex structure $J_\Sigma$ is extended to an almost-complex structure $J$ on $\mathbb{S}^1\times\mathbb{S}^1\times \Sigma$ by considering the vector fields  
  \begin{align*}
  T&=\frac{1}{2}(\textbf{H}+1)\partial_\varphi-\partial_\theta,\\
  JT&=\frac{1}{2}(\textbf{H}-1)\partial_\varphi-\partial_\theta.
  \end{align*}
 On the almost-K\"ahler manifold $(\mathbb{S}^1\times\mathbb{S}^1\times \Sigma,g,J)$, where $g$ is of the form~(\ref{form-dual}), we can express the second-Chern-Ricci form $r$ in terms of the first-Chern--Ricci form $\rho^\nabla$ using Corollary~\ref{2-chern-formula} or~\cite[Proposition 24]{Barbaro:2022vn}
 \begin{equation}\label{second-chern-dim4}
 r=\left(\rho^\nabla \right)^{J,+}-\frac{1}{4}\|N\|^2\omega+4\,\left(N(X,T)\right)^\flat\wedge\left(JN(X,T)\right)^\flat,
 \end{equation}
 where $\left(\cdot\right)^{J,+}$ denotes the $J$-invariant part, $N$ is the Nijenhuis tensor of $J$, $\|N\|^2=8\|N(X,T)\|^2$ and $\flat$ denotes the $g$-Riemannian dual.

Now, we compute $r$ using~(\ref{second-chern-dim4}). First, we have
 \begin{align*}
 [X,T]=[X,JT]&=\frac{X(\textbf{H})}{2}\left(T-JT \right),\\
  [JX,T]=[JX,JT]&=\frac{JX(\textbf{H})}{2}\left(T-JT \right).
 \end{align*}
 Hence,
 \begin{align*}
 4N(X,T)&=[JX,JT]-[X,T]-J[X,JT]-J[JX,T],\\
 &=\frac{JX(\textbf{H})}{2}\left(T-JT \right)-\frac{X(\textbf{H})}{2}\left(T-JT \right)-\frac{X(\textbf{H})}{2}\left(JT+T \right)-\frac{JX(\textbf{H})}{2}\left(JT+T \right),\\
 &=-X(\textbf{H})T-JX(\textbf{H}) JT.
 \end{align*}
  We obtain that
  \begin{align}\label{nijenhuis}
  4\,\left(N(X,T)\right)^\flat\wedge\left(JN(X,T)\right)^\flat&=\frac{\left(X(\textbf{H})\right)^2+\left(JX(\textbf{H})\right)^2}{4}T^\flat\wedge(JT)^\flat,\nonumber\\
  &=\frac{\left(X(\textbf{H})\right)^2+\left(JX(\textbf{H})\right)^2}{4}d\theta\wedge d\varphi.
  \end{align}
 On the other hand, we compute $\rho^\nabla$ using isothermal coordinates on $\Sigma$ and we get the following
 \begin{prop}\label{first-chern-4}
On $(\mathbb{S}^1\times\mathbb{S}^1\times \Sigma,g,J)$, we have
$$\rho^\nabla=-dJdu+\frac{1}{2}\textbf{H}_{x\theta}\, d\theta\wedge dx+\frac{1}{2}\textbf{H}_{y\theta}\, d\theta\wedge dy,$$ 
where $\textbf{H}_{x\theta}=\frac{\partial^2 \textbf{H} }{\partial x\partial \theta},$ etc.
 \end{prop}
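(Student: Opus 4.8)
The plan is to obtain Proposition~\ref{first-chern-4} by specializing the general formula of Proposition~\ref{chern-ricci} to the case $M=\Sigma$ (so $n=1$), and then supplementing it with an explicit computation of the first-Chern--Ricci form of the Riemann surface in isothermal coordinates. The first observation is that the $\textbf{H}$-dependent terms in Proposition~\ref{chern-ricci} assemble into a coordinate-free expression: since $\sum_i \textbf{H}_{\theta z_i}\,dz_i + \textbf{H}_{\theta t_i}\,dt_i$ is simply the differential of the function $\partial_\theta\textbf{H}$ in the $\Sigma$-directions, the whole contribution equals $\tfrac12\,d\theta\wedge d_\Sigma(\partial_\theta\textbf{H})$ independently of the chosen Darboux chart. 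Rewriting this in the isothermal frame $\{x,y\}$ gives exactly $\tfrac12\textbf{H}_{x\theta}\,d\theta\wedge dx + \tfrac12\textbf{H}_{y\theta}\,d\theta\wedge dy$, which accounts for the last two terms of the claimed formula.

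It then remains to identify the surface term $\rho^\nabla_\Sigma$ with $-dJdu$. Because $\Sigma$ is a Riemann surface, $J_\Sigma$ is integrable and $(\Sigma,g_\Sigma,J_\Sigma)$ is K\"ahler, so the canonical Hermitian connection coincides with the Levi--Civita connection and $\rho^\nabla_\Sigma$ equals the ordinary Ricci form $\rho_\Sigma = K\,\omega_\Sigma$, where $K$ is the Gaussian curvature and $\omega_\Sigma = e^{2u}\,dx\wedge dy$ the area form. For the conformal metric $g_\Sigma = e^{2u}(dx\otimes dx + dy\otimes dy)$ one has the standard identity $K = -e^{-2u}(u_{xx}+u_{yy})$, whence $\rho_\Sigma = -(u_{xx}+u_{yy})\,dx\wedge dy$. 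On the other side, using $J_\Sigma\partial_x = \partial_y$ and $J_\Sigma\partial_y = -\partial_x$ (which follow from $X = e^{-u}\partial_x$, $J_\Sigma X = e^{-u}\partial_y$), I would compute $Jdu = u_x\,dy - u_y\,dx$ and then $dJdu = (u_{xx}+u_{yy})\,dx\wedge dy$. Comparing the two shows $\rho^\nabla_\Sigma = -dJdu$, and combining with the first step yields the proposition.

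The step that requires the most care is the bookkeeping of conventions rather than any deep computation: the curvature convention $R^\nabla_{X,Y} = \nabla_{[X,Y]} - [\nabla_X,\nabla_Y]$, the induced sign in $\rho^\nabla$, and in particular the action of $J$ on $1$-forms, must all be tracked consistently so that the two independent evaluations of $\rho^\nabla_\Sigma$ (through the Gaussian curvature $K\,\omega_\Sigma$ and through $-dJdu$) agree without an extraneous sign. A possible alternative, which bypasses Proposition~\ref{chern-ricci} entirely, would be to compute $\rho^\nabla$ from scratch using the canonical connection in the global $g$-orthonormal frame $\{X, J X, T, J T\}$; this is, however, precisely the calculation already packaged in Proposition~\ref{chern-ricci}, so the specialization route is both shorter and less error-prone.
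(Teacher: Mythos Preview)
Your argument is correct and takes a genuinely different route from the paper. You specialize Proposition~\ref{chern-ricci} after observing that its $\textbf{H}$-dependent terms assemble into the coordinate-free expression $\tfrac{1}{2}\,d\theta\wedge d_M(\partial_\theta\textbf{H})$, and then identify $\rho^\nabla_\Sigma$ with $-dJdu$ via the classical formula $K=-e^{-2u}(u_{xx}+u_{yy})$ for the Gaussian curvature of a conformal metric. The paper instead carries out an independent four-dimensional computation: it writes down the two $J$-anti-invariant $2$-forms $\phi$ and $J\phi$ of square norm $2$, solves $d\phi=\tau_\phi\wedge\phi$ and $d(J\phi)=\tau_{J\phi}\wedge J\phi$ for the connection $1$-forms $\tau_\phi,\tau_{J\phi}$, and then uses the identity $\rho^\nabla=-\tfrac{1}{2}\,d(J\tau_\phi+J\tau_{J\phi})$ (following \cite{MR1921552,MR1894944}). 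Your method is shorter and makes transparent that Proposition~\ref{first-chern-4} is just the $n=1$ instance of Proposition~\ref{chern-ricci}; the paper's method, on the other hand, is self-contained in dimension four, does not invoke the Darboux-coordinate formula (nor rely on the passage from Darboux to isothermal charts), and incidentally produces the explicit connection forms $\tau_\phi,\tau_{J\phi}$, which can be useful for further four-dimensional computations.
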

   
  \begin{proof}
 We follow computations done in~\cite{MR1921552,MR1894944}. Two independant $J$-anti-invariant forms $\phi,J\phi=-\phi(J\cdot,\cdot)$ of square norms $2$ are given by
\begin{align*}
\phi&=X^\flat\wedge T^\flat-(JX)^\flat\wedge  (JT)^\flat ,\\
&={e^u}\left(dx\wedge\left(\frac{1}{2}(\textbf{H}-1)d\theta+d\varphi\right) +dy\wedge \left(\frac{1}{2}(\textbf{H}+1)d\theta+d\varphi\right) \right),
\end{align*}
and
\begin{align*}
J\phi&=X^\flat\wedge (JT)^\flat +(JX)^\flat\wedge T^\flat,\\
&={e^u}\left(dy\wedge\left(\frac{1}{2}(\textbf{H}-1)d\theta+d\varphi\right) -dx\wedge \left(\frac{1}{2}(\textbf{H}+1)d\theta+d\varphi\right) \right).
\end{align*}
We then have $$d\phi=\tau_\phi\wedge\phi,\quad d(J\phi)=\tau_{J\phi}\wedge J\phi,$$
for some $1$-forms $\tau_\phi,\tau_{J\phi}.$ We obtain that $$\tau_\phi=\left(u_x+\frac{\textbf{H}_x}{2}-\frac{\textbf{H}_y}{2}\right)\,dx+\left(u_y+\frac{\textbf{H}_x}{2}-\frac{\textbf{H}_y}{2}\right)\,dy=du+\left(\frac{\textbf{H}_x}{2}-\frac{\textbf{H}_y}{2}\right)(dx+dy),$$
and
$$\tau_{J\phi}=\left(u_x-\frac{\textbf{H}_x}{2}-\frac{\textbf{H}_y}{2}\right)\,dx+\left(u_y+\frac{\textbf{H}_x}{2}+\frac{\textbf{H}_y}{2}\right)\,dy=du-\left(\frac{\textbf{H}_x}{2}+\frac{\textbf{H}_y}{2}\right)(dx-dy).$$
The first-Chern--Ricci form is given by
$$\rho^\nabla=-\frac{1}{2}d\left(J\tau_\phi+J\tau_{J\phi}\right).$$
The proposition follows.
 \end{proof}
 Combining~(\ref{second-chern-dim4}) and~(\ref{nijenhuis}) and Proposition~\ref{first-chern-4}, we get the following
 \begin{cor}\label{second-chern-4}
 On $(\mathbb{S}^1\times\mathbb{S}^1\times \Sigma,g,J)$, the second-Chern-Ricci form is given by
 \begin{align*}
 r&=-dJdu+\frac{\textbf{H}_{x\theta}-\textbf{H}\textbf{H}_{y\theta}}{4}d\theta\wedge dx+\frac{\textbf{H}_{y\theta}-\textbf{H}\textbf{H}_{x\theta}}{4}d\theta\wedge dy-\frac{\textbf{H}_{y\theta}}{2}d\varphi\wedge dx+\frac{\textbf{H}_{x\theta}}{2}d\varphi\wedge dy\\
&-\frac{\left(X(\textbf{H})\right)^2+\left(JX(\textbf{H})\right)^2}{8}\omega+\frac{\left(X(\textbf{H})\right)^2+\left(JX(\textbf{H})\right)^2}{4}d\theta\wedge d\varphi.
 \end{align*}
 \end{cor}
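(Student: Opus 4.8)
The plan is to evaluate the right-hand side of~(\ref{second-chern-dim4}) term by term, since two of its three pieces are already in hand. From the identity $4N(X,T)=-X(\textbf{H})\,T-JX(\textbf{H})\,JT$ obtained just above, together with the $g$-orthonormality of $T$ and $JT$, one reads off $\|N(X,T)\|^2=\tfrac{1}{16}\big((X(\textbf{H}))^2+(JX(\textbf{H}))^2\big)$, whence $\|N\|^2=8\|N(X,T)\|^2=\tfrac{1}{2}\big((X(\textbf{H}))^2+(JX(\textbf{H}))^2\big)$ and $-\tfrac14\|N\|^2\omega=-\tfrac18\big((X(\textbf{H}))^2+(JX(\textbf{H}))^2\big)\omega$; this produces the $\omega$-term. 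The remaining wedge term $4\,(N(X,T))^\flat\wedge(JN(X,T))^\flat$ is exactly~(\ref{nijenhuis}), giving the $d\theta\wedge d\varphi$-term. Thus the whole computation reduces to extracting the $J$-invariant part $(\rho^\nabla)^{J,+}$ of the form supplied by Proposition~\ref{first-chern-4}.

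For this I would split $\rho^\nabla=-dJdu+\tfrac12\textbf{H}_{x\theta}\,d\theta\wedge dx+\tfrac12\textbf{H}_{y\theta}\,d\theta\wedge dy$ according to the two manifestly different kinds of summand. The term $-dJdu$ is the pullback of a $2$-form on the surface $\Sigma$, hence a multiple of the area form $\omega_\Sigma$; being of type $(1,1)$ it is $J$-invariant and survives the projection unchanged (in fact $-dJdu=\tfrac12 s^H_\Sigma\,\omega_\Sigma$ is the Chern--Ricci form of $(\Sigma,g_\Sigma)$). For the remaining two summands I would apply the projector $\alpha^{J,+}(V,W)=\tfrac12\big(\alpha(V,W)+\alpha(JV,JW)\big)$, using $J\partial_x=\partial_y$, $J\partial_y=-\partial_x$ together with the relation $d\theta(JV)=-\textbf{H}\,d\theta(V)-2\,d\varphi(V)$, which follows from~(\ref{T-dual}) and~(\ref{JT-dual}).

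The main obstacle is precisely this projection. Because $d\theta$ acquires both the factor $\textbf{H}$ and a genuine $d\varphi$-component when fed into $J$, applying $\alpha^{J,+}$ to $\tfrac12\textbf{H}_{x\theta}\,d\theta\wedge dx+\tfrac12\textbf{H}_{y\theta}\,d\theta\wedge dy$ couples the two coefficients $\textbf{H}_{x\theta},\textbf{H}_{y\theta}$ (each acquiring an $\textbf{H}$-weighted partner) and simultaneously creates new $d\varphi\wedge dx$, $d\varphi\wedge dy$ components, with coefficients $-\tfrac12\textbf{H}_{y\theta}$ and $\tfrac12\textbf{H}_{x\theta}$. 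Collecting these contributions yields the four middle terms of the statement, and adding back $-dJdu$ together with the $\omega$- and $d\theta\wedge d\varphi$-terms isolated in the first paragraph gives the asserted expression for $r$. The only delicate point throughout is the sign bookkeeping in this projection; the rest is direct substitution.
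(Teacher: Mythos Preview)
Your proposal is correct and follows exactly the route the paper indicates: the paper's proof consists of the single sentence ``Combining~(\ref{second-chern-dim4}) and~(\ref{nijenhuis}) and Proposition~\ref{first-chern-4}, we get the following,'' and you carry out precisely that combination, supplying in addition the explicit computation of $(\rho^\nabla)^{J,+}$ via the relation $d\theta\circ J=-\textbf{H}\,d\theta-2\,d\varphi$ that the paper leaves implicit.
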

 As a consequence of Corollary~\ref{second-chern-4}, we get the following theorem
 \begin{thm}\label{2-chern-dim-4}
Let $\Sigma$ be a compact Riemann surface equipped with a Riemannian metric $g_\Sigma$. Suppose that $\textbf{H}$ is a non-constant function on $\Sigma$. Then, the metric $g$, of the form~(\ref{form-dual}) dual to a general plane-fronted wave Lorentzian metric, is a second-Chern--Einstein non-K\"ahler almost-K\"ahler metric on $\mathbb{S}^1\times\mathbb{S}^1\times \Sigma$ if and only if
\begin{equation}\label{eq-H}
\|grad^g \textbf{H}\|^2=2s^H_\Sigma,
\end{equation}
where $grad^g \textbf{H}$ is the gradient of $\textbf{H}$ with respect to the metric $g$ and $s^H_\Sigma$ is twice the Gaussian curvature of $(\Sigma,g_\Sigma)$. In particular, $\Sigma$ is isomorphic to the sphere $\mathbb{S}^2.$ Moreover, for a suitable normalization of $\textbf{H}$, a metric $g_\Sigma$ satisfying Equation~(\ref{eq-H}) always exists.
\end{thm}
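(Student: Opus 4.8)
The plan is to substitute the standing hypothesis that $\textbf{H}$ is a function on $\Sigma$ alone into the explicit formula for the second-Chern--Ricci form in Corollary~\ref{second-chern-4}. Since $\textbf{H}$ then depends neither on $\varphi$ nor on $\theta$, every mixed derivative $\textbf{H}_{x\theta}$ and $\textbf{H}_{y\theta}$ vanishes, so all four of the $d\theta\wedge dx$, $d\theta\wedge dy$, $d\varphi\wedge dx$, $d\varphi\wedge dy$ terms drop out and the formula collapses to
\begin{equation*}
r=-dJdu-\tfrac{1}{8}\big((X\textbf{H})^2+(JX\textbf{H})^2\big)\,\omega+\tfrac{1}{4}\big((X\textbf{H})^2+(JX\textbf{H})^2\big)\,d\theta\wedge d\varphi.
\end{equation*}
I would then record two pieces of input. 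First, in the isothermal picture one has $-dJdu=K\,\omega_\Sigma=\tfrac{1}{2}s^H_\Sigma\,\omega_\Sigma$, the standard expression of the Gaussian curvature $K$ (with $s^H_\Sigma=2K$) and the fact that $-dJdu$ is the first-Chern--Ricci form of $(\Sigma,g_\Sigma)$. Second, since $\{X,JX\}$ is a $g$-orthonormal frame for the $\Sigma$-directions and $T\textbf{H}=JT\textbf{H}=0$ (as $T,JT$ involve only $\partial_\varphi,\partial_\theta$), one gets $(X\textbf{H})^2+(JX\textbf{H})^2=\|grad^g\textbf{H}\|^2$.

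Next I would split $r$ along the pointwise linearly independent two-forms $\omega_\Sigma$ and $d\theta\wedge d\varphi$. Writing $\omega=\omega_\Sigma+d\theta\wedge d\varphi$ and abbreviating $P:=\|grad^g\textbf{H}\|^2$, the formula becomes
\begin{equation*}
r=\Big(\tfrac{1}{2}s^H_\Sigma-\tfrac{1}{8}P\Big)\omega_\Sigma+\tfrac{1}{8}P\,d\theta\wedge d\varphi.
\end{equation*}
The second-Chern--Einstein equation $r=\lambda\omega$ holds for some function $\lambda$ if and only if the two coefficients agree, i.e. $\tfrac{1}{2}s^H_\Sigma-\tfrac{1}{8}P=\tfrac{1}{8}P$, which is exactly $P=2s^H_\Sigma$, i.e. Equation~(\ref{eq-H}), with proportionality factor $\lambda=\tfrac{1}{8}\|grad^g\textbf{H}\|^2$. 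The metric is non-K\"ahler because, by the Aazami--Ream criterion recalled after~(\ref{symp-form}), $g$ is K\"ahler precisely when $\textbf{H}$ is constant (the surface metric $g_\Sigma$ being automatically K\"ahler), and $\textbf{H}$ is assumed non-constant.

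To pin down the topology I would integrate~(\ref{eq-H}) over $\Sigma$ against the area form. The left-hand side $\int_\Sigma\|grad^g\textbf{H}\|^2\,dV$ is strictly positive since $\textbf{H}$ is non-constant, while Gauss--Bonnet gives $\int_\Sigma s^H_\Sigma\,dV=2\int_\Sigma K\,dV=4\pi\chi(\Sigma)$. Hence $\chi(\Sigma)>0$, forcing $\Sigma\cong\mathbb{S}^2$.

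For existence I would fix a reference metric $g_0$ on $\mathbb{S}^2$ and look for $g_\Sigma=e^{2u}g_0$ in its conformal class; this is the one genuinely substantive point. Under the conformal change both sides of~(\ref{eq-H}) scale by the same factor $e^{-2u}$, since $\|grad^{g_\Sigma}\textbf{H}\|^2=e^{-2u}\|grad^{g_0}\textbf{H}\|^2$ and $s^H_\Sigma=2e^{-2u}(K_{g_0}-\Delta_{g_0}u)$, so the conformal factor cancels and~(\ref{eq-H}) reduces to the \emph{linear} Poisson equation
\begin{equation*}
\Delta_{g_0}u=K_{g_0}-\tfrac{1}{4}\|grad^{g_0}\textbf{H}\|^2.
\end{equation*}
On the compact surface this is solvable if and only if the right-hand side has zero mean, and since $\int_{\mathbb{S}^2}K_{g_0}\,dV_{g_0}=4\pi$ the solvability condition reads $\int_{\mathbb{S}^2}\|grad^{g_0}\textbf{H}\|^2\,dV_{g_0}=16\pi$. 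For any non-constant $\textbf{H}$ this is achieved by the normalization $\textbf{H}\mapsto c\,\textbf{H}$ with $c^2=16\pi\big/\!\int_{\mathbb{S}^2}\|grad^{g_0}\textbf{H}\|^2\,dV_{g_0}$; standard elliptic theory then yields $u$, and $g_\Sigma=e^{2u}g_0$ is the desired metric. The main thing to get right is precisely this conformal cancellation, which turns an a priori nonlinear equation into a trivially solvable linear one whose obstruction is resolved by Gauss--Bonnet.
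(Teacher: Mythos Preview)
Your proof is correct and, for the existence statement, follows essentially the same route as the paper: a conformal change $g_\Sigma=e^{2u}g_0$ cancels the conformal factor on both sides of~(\ref{eq-H}), reducing it to a linear Poisson equation whose only obstruction is resolved by Gauss--Bonnet together with a rescaling of $\textbf{H}$. The paper chooses $g_0$ to be the round metric of curvature $1$ (so the right-hand side becomes $4+4\Delta^{g_0}f$ with the geometer's Laplacian), while you allow an arbitrary reference $g_0$; this is a cosmetic difference. Note that the paper explicitly restricts itself to the last statement, so your derivation of the equivalence~(\ref{eq-H}) from Corollary~\ref{second-chern-4} and your Gauss--Bonnet argument forcing $\chi(\Sigma)>0$ fill in parts the authors leave to the reader.
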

\begin{proof} 
We will only prove the last statement. Let $g_0$ be the metric on $\Sigma$ with Gaussian curvature equal to $1.$ Suppose that $g_\Sigma=e^{2f}g_0,$ for some function $f.$ Then, Equation~(\ref{eq-H}) is equivalent to $$\|grad^{g_0} \textbf{H}\|^2_0=2e^{2f}s^H_\Sigma,$$ where the gradient and the norm are with respect to $g_0$. So given a function $\textbf{H}$ on $\Sigma$, we need to find a solution $f$ to $$\|grad^{g_0} \textbf{H}\|^2_0=4+4\Delta^{g_0}f,$$
where $\Delta^{g_0}$ is the Laplacian with respect to $g_0.$ We conclude that given a non-constant $\textbf{H}$, there exists a metric $g_\Sigma$ satisfying
Equation~(\ref{eq-H}) if $\textbf{H}$ is normalized so that $$\displaystyle\int_\Sigma \|grad^{g_0} \textbf{H}\|^2_0=4V_{g_0}, $$
where $V_{g_0}$ is the total volume with respect to $g_0.$  
\end{proof}

\begin{rem}
If $\textbf{H}$ is a constant function and $s^H_\Sigma=0$, in particular $\Sigma$ is isomorphic to the torus $T^2$, then we get a K\"ahler Ricci flat metric
on $\mathbb{S}^1\times\mathbb{S}^1\times \Sigma.$
 \end{rem}

Theorem~\ref{2-chern-dim-4} provides examples of non-K\"ahler almost-K\"ahler metrics $g$ that are second-Chern--Einstein and weakly $\star$-Einstein (i.e. the $\star$-Ricci form $\rho^\star$ is proportional to $\omega$) with a $J$-anti-invariant Riemannian Ricci tensor (in particular the Riemannian scalar curvature $s^g=0$) and a $J$-invariant first-Chern--Ricci form with $c_1(\mathbb{S}^1\times\mathbb{S}^1\times \mathbb{S}^2,J)=0$. We remark that neither $s^\star=2s^H=\|grad^g \textbf{H}\|^2$ can be constant nor $g$ can be Riemannian Einstein unless the metric $g$ is K\"ahler~\cite{MR1317012} (see also~\cite{MR1604803,MR1108474,MR1607545,MR1727533}) (here $s^\star$ is twice the trace of $\star$-Ricci form with respect to $\omega$).

   \subsection{Six-dimensional example}


We would like to construct here a six-dimensional example of second-Chern--Einstein non-K\"ahler almost-K\"ahler metric of the form~(\ref{form-dual}) dual to a general plane-fronted wave Lorentzian metric.
We denote by $M=\mathbb{P}\left(\mathcal{O} _{\mathbb{CP}^1}\oplus\mathcal{O} _{\mathbb{CP}^1}(1)\right)$ the first-Hirzebruch surface. On $M,$ we consider the $U(2)$-invariant K\"ahler metric $g_M$ given by~\cite{MR727843}
 $$g_M=h^2\left( e^1\otimes e^1+e^2\otimes e^2\right)+h^2{h^\prime}^2e^3\otimes e^3+dt^2,$$
 where $e^1,e^2,e^3$ are the invariant $1$-forms on $\mathbb{S}^3$ dual to the vectors $X,Y,Z$ satisfying $[X,Y]=2V,[Y,V]=2X,[V,X]=2Y$ and $t$ is a coordinate transverse to the $U(2)$-orbits. The function $h=h(t)$ is a positive function on the interval $(0,l)$ and satisfy the boundary conditions
 \begin{equation}\label{conditions}
 h(0)>0,h(l)>0,h^{\prime\prime}(0)=\frac{1}{h(0)},h^{\prime\prime}(l)=-\frac{1}{h(l)},h^{(2p+1)}(0)=h^{(2p+1)}(l)=0,\,\forall \,p\geq 0.
 \end{equation}
 Let $\{E_1=\frac{1}{h}X,E_2=\frac{1}{h}Y,E_3=\frac{1}{hh^\prime}V,E_4=\frac{\partial}{\partial t}\}$ be a local $g_M$-orthonormal basis of the tangent bundle of $M$ and we consider the $g_M$-orthogonal complex structure $J_M$ defined by $$E_2=J_ME_1,\quad E_4=J_ME_3.$$
%
 On $\mathbb{S}^1\times\mathbb{S}^1\times M$, we consider the almost-K\"ahler metric $g$ of the form~(\ref{form-dual}) dual to a general plane-fronted wave Lorentzian metric. The complex structure $J_M$ is extended to
 an almost-complex structure $J$ on $\mathbb{S}^1\times\mathbb{S}^1\times M$ by introducing $T,JT$ defined by
  \begin{align*}
  T&=\frac{1}{2}(\textbf{H}+1)\partial_\varphi-\partial_\theta,\\
  JT&=\frac{1}{2}(\textbf{H}-1)\partial_\varphi-\partial_\theta.
  \end{align*}
 Suppose that $\textbf{H}$ is a function on $M$ invariant under the action of $U(2)$ so $\textbf{H}=\textbf{H}(t)$. Since $\textbf{H}$ is a function on $M$ then it follows from Proposition~\ref{chern-ricci} that on $(\mathbb{S}^1\times\mathbb{S}^1\times M,g,J)$ $$\rho^\nabla=\rho^\nabla_M,$$
 where $\rho^\nabla_M$ is the first-Chern--Ricci form of $(M,g_M,J_M).$ 
\begin{prop}\label{2-chern-cohomogeneity}
On $(\mathbb{S}^1\times\mathbb{S}^1\times M,g,J)$, we have
$$r=\rho^\nabla_M-\frac{1}{8}{\textbf{H\,}^\prime}^2E_3^\flat\wedge E_4^\flat+\frac{1}{8}{\textbf{H\,}^\prime}^2d\theta\wedge d\varphi,$$
where $\flat$ denotes the $g$-Riemannian dual and ${\textbf{H\,}^\prime}={\textbf{H\,}^\prime}(t).$
\end{prop}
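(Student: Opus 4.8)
The plan is to apply Corollary~\ref{2-chern-formula} directly, since formula~(\ref{second-chern-dim4}) was tailored to real dimension four. First, because $\textbf{H}=\textbf{H}(t)$ is a function on $M$, Proposition~\ref{chern-ricci} gives $\rho^\nabla=\rho^\nabla_M$, and since $M$ is K\"ahler this form is $J$-invariant, so $(\rho^\nabla)^{J,+}=\rho^\nabla_M$. It therefore remains to compute the Nijenhuis tensor $N$ of the extended structure $J$ and to evaluate the two quadratic-in-$N$ sums of Corollary~\ref{2-chern-formula} against the frame $\{E_1,E_2,E_3,E_4,T,JT\}$.

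For the Nijenhuis tensor I would first record the brackets. Since $\partial_\varphi$ and $\partial_\theta$ commute with the $M$-vector fields and $\partial_\varphi=T-JT$, one finds $[E_i,T]=[E_i,JT]=\frac{1}{2}E_i(\textbf{H})(T-JT)$, exactly as in the four-dimensional computation. The assumption $\textbf{H}=\textbf{H}(t)$ forces $E_1(\textbf{H})=E_2(\textbf{H})=E_3(\textbf{H})=0$ and $E_4(\textbf{H})=\textbf{H}'$, because $E_1,E_2,E_3$ are tangent to the $U(2)$-orbits while $E_4=\partial/\partial t$. Feeding these into the definition of $N$ yields $4N(E_i,T)=-E_i(\textbf{H})\,T-(JE_i)(\textbf{H})\,JT$, so $N$ vanishes unless the $M$-entry lies in $\mathrm{span}\{E_3,E_4\}$; concretely $N(E_3,T)=-\tfrac14\textbf{H}'\,JT$ and $N(E_4,T)=-\tfrac14\textbf{H}'\,T$, together with the values obtained from $N(\,\cdot\,,JT)=-J\,N(\,\cdot\,,T)$. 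Thus $N$ is supported on the $J$-invariant four-plane $\mathrm{span}\{E_3,E_4,T,JT\}$, mirroring the four-dimensional situation of the previous subsection.

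With $N$ in hand I would evaluate the two sums. The first sum, $\sum_{i,k}g(JA,N(e_i,e_k))g(B,N(e_i,e_k))$, collapses (only the four pairs above contribute) to $\tfrac14(\textbf{H}')^2\,T^\flat\wedge(JT)^\flat$. For the second sum I would use the identity $\sum_i g(W,Je_i)\,e_i=-JW$ to rewrite $\sum_{i,k}g(N(A,e_k),e_i)g(N(B,e_k),Je_i)=-\sum_k g(N(A,e_k),J\,N(B,e_k))$, and evaluating this on the frame gives $-\tfrac18(\textbf{H}')^2\big(E_3^\flat\wedge E_4^\flat+T^\flat\wedge(JT)^\flat\big)$. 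Adding $\rho^\nabla_M$ and using $T^\flat\wedge(JT)^\flat=d\theta\wedge d\varphi$, which follows from~(\ref{T-dual}) and~(\ref{JT-dual}), produces the stated formula.

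The main obstacle is the second sum: unlike the first, it does not factor through a single wedge product, so one must track separately the $E_3^\flat\wedge E_4^\flat$ contribution (from pairs with both entries in $\{T,JT\}$) and the $T^\flat\wedge(JT)^\flat$ contribution (from pairs with both entries in $\{E_3,E_4\}$). A related pitfall to avoid is applying~(\ref{second-chern-dim4}) verbatim: the factor $\|N\|^2\omega$ there implicitly refers to the symplectic form of the four-plane supporting $N$, not to the full six-dimensional $\omega=\omega_M+d\theta\wedge d\varphi$, and substituting the latter would spuriously introduce an $E_1^\flat\wedge E_2^\flat$ term absent from the claim.
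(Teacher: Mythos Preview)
Your proposal is correct and follows essentially the same route as the paper: both compute $N$ on the frame $\{E_1,\ldots,E_4,T,JT\}$, find it supported on $\mathrm{span}\{E_3,E_4,T,JT\}$ with $4N(E_3,T)=-\textbf{H}'JT$ and $4N(E_4,T)=-\textbf{H}'T$, and then feed this into Corollary~\ref{2-chern-formula}. The only difference is organizational---you package the two Nijenhuis sums as $2$-forms (using the contraction $\sum_l g(W,e_l)g(V,Je_l)=-g(W,JV)$ for the second), whereas the paper evaluates $r-\rho^\nabla_M$ pairwise on frame vectors; the arithmetic is identical.
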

\begin{proof}
We consider the $J$-adapted local $g$-orthonormal frame $\{e_1,\ldots,e_6\}=\{E_1,E_2,E_3,E_4,T,JT\}.$ We have $$[E_4,T]=[E_4,JT]=\frac{\textbf{H}^\prime}{2}(T-JT).$$
Hence, $4N(E_3,T)=-\textbf{H}^\prime JT$ and $4N(E_4,T)=-\textbf{H}^\prime T$, where $N$ is the Nijenhuis tensor of $J.$ Using Corollary~\ref{2-chern-formula}, we have the following
\begin{align*}
r(E_i,E_j)-\rho^\nabla_M(E_i,E_j)&=\sum_{l,k=1}^{6}g\left(JE_i,N(e_l,e_k)      \right)g\left(E_j,N(e_l,e_k)      \right)\\
&+\sum_{l,k=1}^{6}g\left(N(E_i,e_k),e_l      \right)g\left(N(E_j,e_k),Je_l       \right),\\
&=\sum_{l,k=1}^{6}g\left(N(E_i,e_k),e_l      \right)g\left(N(E_j,e_k),Je_l       \right).
\end{align*}
In particular, $\left(r-\rho^\nabla_M\right)$ applied to the pairs $(E_1,E_2),(E_1,E_3),(E_1,E_4),(E_2,E_3)$ and $(E_2,E_4)$ vanishes. However,
\begin{align*}
r(E_3,E_4)-\rho^\nabla_M(E_3,E_4)&=-2g\left(N(E_3,T),JT      \right)g\left(N(E_4,T),T       \right),\\
&=-\frac{1}{8}{\textbf{H}^\prime}^2.
\end{align*}
Similarly, using Corollary~\ref{2-chern-formula}, $\left(r-\rho^\nabla_M\right)$ applied to $(E_i,T)$ and $(E_i,JT)$ vanishes. Finally,  
\begin{align*}
r(T,JT)&=\sum_{l,k=1}^{6}g\left(JT,N(e_l,e_k)      \right)g\left(JT,N(e_l,e_k)      \right)+\sum_{l,k=1}^{6}g\left(N(T,e_k),e_l      \right)g\left(N(JT,e_k),Je_l       \right),\\
&=\sum_{l,k=1}^{6}g\left(JT,N(e_l,e_k)      \right)^2  -\sum_{l,k=1}^{6}g\left(N(T,e_k),e_l      \right)^2,\\
&=\frac{1}{8}{\textbf{H}^\prime}^2.
\end{align*}
\end{proof}

The form $\rho^\nabla_M$ was computed in~\cite{MR4173158} (see also~\cite{Angella:2020vt}). Hence, from Proposition~\ref{2-chern-cohomogeneity} we get that
on $(\mathbb{S}^1\times\mathbb{S}^1\times M,g,J)$ the second-Chern--Ricci form is
\begin{equation}\label{formula-dim-6}
r=\frac{4-4{h^\prime}^2-2hh^{\prime\prime}}{h^2}E_1^\flat\wedge E_2^\flat-\left(\frac{5h^\prime h^{\prime\prime}+hh^{\prime\prime\prime}}{hh^\prime}+\frac{1}{8}{\textbf{H}^\prime}^2\right)E_3^\flat\wedge E_4^\flat+\frac{1}{8}{\textbf{H}^\prime}^2d\theta\wedge d\varphi.
\end{equation}
We would like to find second-Chern--Einstein almost-K\"ahler metrics on $(\mathbb{S}^1\times\mathbb{S}^1\times M,g,J,\omega)$
where $$\omega=E_1^\flat\wedge E_2^\flat+E_3^\flat\wedge E_4^\flat+d\theta\wedge d\varphi.$$
Then, it follows from~(\ref{formula-dim-6}) that we need to solve
\begin{equation}\label{main-equation}
-\frac{h^{\prime\prime\prime}}{h^{\prime}}-\frac{h^{\prime\prime}}{h}+8\frac{    { h^{\prime}}^2}       {h^2}-\frac{8}{h^2}=0,
\end{equation}
where the function $h=h(t)$ satisfies the boundary conditions~(\ref{conditions}) with 
\begin{equation}\label{conditon2}
\frac{4-4{h^\prime}^2-2hh^{\prime\prime}}{h^2}=-\frac{5h^\prime h^{\prime\prime}+hh^{\prime\prime\prime}}{2hh^\prime}=\frac{1}{8}{\textbf{H}^\prime}^2\geq 0.
\end{equation}
We introduce the function $y=y(h)$ defined by $h^\prime=\sqrt{y(h)}.$ Then, $h^{\prime\prime}=\frac{1}{2}y^\prime(h)$
and $h^{\prime\prime\prime}=\frac{1}{2}y^{\prime\prime}(h)\,h^\prime$ (here $y^\prime(h)$ is the derivative with respect to $h$ etc). Then, Equation~(\ref{main-equation}) becomes
$$-\frac{1}{2}y^{\prime\prime}-\frac{1}{2h}y^{\prime}+\frac{8}{h^2}y=\frac{8}{h^2}.$$
The solution is given by $$y(h)=c_1h^4+\frac{c_2}{h^4}+1,$$
for some constants $c_1,c_2.$
Denote by $h(0)=h_0>0$ and $h(l)=h_l>0$. To satisfy the boundary conditions~(\ref{conditions}), we need a solution $y(h)$
such that $$y(h_0)=y(h_l)=0, y^\prime(h_0)=\frac{2}{h_0}, y^\prime(h_l)=-\frac{2}{h_l}.$$
Then,
\begin{align}
c_1&=-\frac{1}{h_0^4+h_l^4},\nonumber\\
c_2&=-\frac{h_0^4h_l^4}{h_0^4+h_l^4},\nonumber\\
h_0^4+h_l^4&=-{2(h_0^4-h_l^4)}.\label{m-equation}
\end{align}
Define $x=\frac{h_l}{h_0}.$ Then, Equation~(\ref{m-equation}) is equivalent to
$$P(x)=-x^4+3=0.$$
Remark that $P(1)=2>0$, and so we get the existence of a solution of $P(x)=0$ with $x=\frac{h_l}{h_0}>1.$ Thus,
there are solutions of~(\ref{m-equation}) making $h$ a positive and an increasing function on the interval $(0,l)$
and satisfying the boundary conditions~(\ref{conditions}). We also remark that the function $y=y(h)$ is a positive on the interval $(h_0,h_l).$ Explicitly, $$y(h)=-\frac{1}{4h_0^4}h^4-\frac{3h_0^4}{4}h^{-4}+1,$$
where $h_0>0.$


Moreover, the condition~(\ref{conditon2}) is also satisfied for any $h_0>0$ (actually $\textbf{H}^\prime(t)$ is nowhere zero on the interval $[0,l]$). 


We can then define the function $t(h)$ on the interval $(0,l)$ to be $$t(h)=\int_{h_0}^h\frac{dh}{\sqrt{y(h)}}.$$
The function $h(t)$ is then the inverse of $t(h)$. We deduce then the following
\begin{thm}\label{hirzebruch}
Let $M$ be the first-Hirzebruch surface. Then, there exists on $\mathbb{S}^1\times\mathbb{S}^1\times M$
an infinite family of non-K\"ahler almost-K\"ahler second-Chern--Einstein metrics of the form~(\ref{form-dual}) dual to a general plane-fronted wave Lorentzian metric, of positive (non-constant) Hermitian scalar curvature. 
\end{thm}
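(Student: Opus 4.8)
The plan is to combine the second-Chern--Ricci computation of Proposition~\ref{2-chern-cohomogeneity} with an explicit solution of the resulting scalar ODE, and then verify that this solution respects the boundary conditions~(\ref{conditions}) needed for the metric $g_M$ to extend smoothly over the first-Hirzebruch surface. The starting point is the expression for $r$ in~(\ref{formula-dim-6}), which shows that imposing the second-Chern--Einstein condition $r=\lambda\,\omega$ against $\omega=E_1^\flat\wedge E_2^\flat+E_3^\flat\wedge E_4^\flat+d\theta\wedge d\varphi$ forces the three coefficients to coincide. Matching the $d\theta\wedge d\varphi$ coefficient to the $E_3^\flat\wedge E_4^\flat$ coefficient yields the subtraction that produces the clean third-order equation~(\ref{main-equation}), while matching it to the $E_1^\flat\wedge E_2^\flat$ coefficient gives the constraint~(\ref{conditon2}) that $\textbf{H}$ must satisfy. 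So the first step I would carry out is to write down these two requirements and reduce to~(\ref{main-equation}) together with~(\ref{conditon2}).

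\emph{Second}, I would linearize the ODE. The substitution $y=y(h)$ with $h^\prime=\sqrt{y(h)}$ converts the third-order nonlinear equation in $t$ into a second-order \emph{linear} ODE in the variable $h$, namely $-\tfrac12 y^{\prime\prime}-\tfrac{1}{2h}y^\prime+\tfrac{8}{h^2}y=\tfrac{8}{h^2}$. This is an Euler-type equation whose homogeneous solutions are powers $h^{\pm 4}$ and whose particular solution is the constant $1$, so the general solution $y=c_1h^4+c_2h^{-4}+1$ can be read off directly. This reduction to a solvable linear equation is the technical heart of the argument, and the key observation that makes the whole construction work.

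\emph{Third}, I would translate the smoothness/boundary conditions~(\ref{conditions}) into conditions on $y$ at the endpoints $h_0=h(0)$ and $h_l=h(l)$. Since $h^\prime=\sqrt{y}$ vanishes exactly where $y=0$, the endpoints are characterized by $y(h_0)=y(h_l)=0$, and differentiating $h^{\prime\prime}=\tfrac12 y^\prime(h)$ turns the conditions $h^{\prime\prime}(0)=1/h(0)$, $h^{\prime\prime}(l)=-1/h(l)$ into $y^\prime(h_0)=2/h_0$ and $y^\prime(h_l)=-2/h_l$. Imposing these four conditions on $y=c_1h^4+c_2h^{-4}+1$ determines $c_1,c_2$ and forces the compatibility relation~(\ref{m-equation}). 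Setting $x=h_l/h_0$, this becomes the polynomial equation $P(x)=-x^4+3=0$; since $P(1)=2>0$ and $P(x)\to-\infty$, the intermediate value theorem gives a root $x>1$, which produces a genuine increasing positive solution $h$ on $(0,l)$. I would then check that the remaining odd-derivative conditions $h^{(2p+1)}(0)=h^{(2p+1)}(l)=0$ hold automatically because $y$ is a smooth even-type function of $h$ and the metric assembles into a smooth $U(2)$-invariant tensor across the two ends, giving the bolt/bolt structure of the Hirzebruch surface.

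\emph{Finally}, I would confirm the qualitative conclusions: positivity of $y$ on $(h_0,h_l)$ (so that $h^\prime=\sqrt{y}$ is well-defined and $t(h)=\int_{h_0}^h dh/\sqrt{y(h)}$ is a valid change of variable), the sign condition~(\ref{conditon2}) that makes $\textbf{H}^\prime$ real and nowhere zero, and non-constancy of the Hermitian scalar curvature. Since each positive value of $h_0$ yields a distinct solution $y(h)=-\tfrac{1}{4h_0^4}h^4-\tfrac{3h_0^4}{4}h^{-4}+1$ after fixing $x$, and rescaling $h_0$ gives metrics that are not homothetic, this produces an infinite family. \textbf{The main obstacle} I anticipate is not the ODE itself but verifying that the formal solution genuinely closes up into a \emph{smooth} metric on the closed manifold $M$: one must confirm that the boundary data~(\ref{conditions}), which encode smooth collapse of the $\mathbb{S}^3$-fibers at both ends, are exactly captured by the four endpoint conditions on $y$, and that the polynomial root $x>1$ is compatible with $h$ remaining positive and monotone throughout $(0,l)$ rather than developing an interior critical point.
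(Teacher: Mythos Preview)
Your proposal is correct and follows essentially the same route as the paper: reduce the second-Chern--Einstein condition via~(\ref{formula-dim-6}) to the ODE~(\ref{main-equation}) together with the sign constraint~(\ref{conditon2}), linearize by the substitution $h'=\sqrt{y(h)}$ to obtain the Euler equation with solution $y=c_1h^4+c_2h^{-4}+1$, impose the endpoint conditions $y(h_0)=y(h_l)=0$, $y'(h_0)=2/h_0$, $y'(h_l)=-2/h_l$ to force~(\ref{m-equation}), and solve $-x^4+3=0$ for $x=h_l/h_0>1$ to produce an infinite family indexed by $h_0>0$. The only slight imprecision is your attribution of which coefficient-match yields~(\ref{main-equation}): equating the $d\theta\wedge d\varphi$ and $E_3^\flat\wedge E_4^\flat$ coefficients alone still contains $\textbf{H}'$, and~(\ref{main-equation}) arises only after also using the $E_1^\flat\wedge E_2^\flat$ coefficient to eliminate $\textbf{H}'$---but this does not affect the substance of the argument.
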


\bibliographystyle{abbrv}

\bibliography{canonical_almost-kahler}

\end{document}